\newcommand{\etal}{\textit{et al}.}
\newcommand{\ie}{\textit{i}.\textit{e}.}
\newcommand{\eg}{\textit{e}.\textit{g}.}
\newtheorem{definition}{Definition}
\newdefinition{remark}{Remark}
\newtheorem{lemma}{Lemma}
\newtheorem{theorem}{Theorem}
\newproof{proof}{Proof}
\journal{Transportation Research Part C: Emerging Technologies}
\begin{document}
	
	\begin{frontmatter}
		
		\title{Mixed platoon control of automated and human-driven vehicles at a signalized intersection: dynamical analysis and optimal control}
		
		
		
		\author[a,b]{Chaoyi Chen}
		\author[a,b]{Jiawei Wang}
		\author[a,b]{Qing Xu}
		\author[a,b]{Jianqiang Wang}
		\author[a,b]{Keqiang Li\corref{cor1}}
		\ead{likq@tsinghua.edu.cn}
		
		\cortext[cor1]{Corresponding author: Keqiang Li}
		

		\address[a]{School of Vehicle and Mobility, Tsinghua University, Beijing, 100084, China}
		\address[b]{Tsinghua University-Didi Joint Research Center for Future Mobility,	Beijing, 100084, China}
		\begin{abstract}
			The emergence of Connected and Automated Vehicles (CAVs) promises better traffic mobility for future transportation systems. Existing research mostly focused on fully-autonomous scenarios, while the potential of CAV control at a mixed traffic intersection where human-driven vehicles (HDVs) also exist has been less explored. This paper proposes a notion of ``$ 1+n $" mixed platoon, consisting of one leading CAV and $ n $ following HDVs, and formulates a platoon-based optimal control framework for CAV control at a signalized intersection. Based on the linearized dynamics model of the ``$ 1+n $" mixed platoon, fundamental properties including stability and controllability are under rigorous theoretical analysis. Then, a constrained optimal control framework is established, aiming at improving the global traffic efficiency and fuel consumption at the intersection via direct control of the CAV. A hierarchical event-triggered algorithm is also designed for practical implementation of the optimal control method between adjacent mixed platoons when approaching the intersection. Extensive numerical simulations at multiple traffic volumes and market penetration rates validate the greater benefits of the mixed platoon based method, compared with traditional trajectory optimization methods for one single CAV.
		\end{abstract}
		
		\begin{keyword}
			Connected and Automated Vehicle \sep Signalized intersection \sep Mixed traffic system \sep Optimal control  
		\end{keyword}
		
	\end{frontmatter}
	
	
	\section{Introduction}
	
	As planned points of conflict in urban traffic networks, intersections play a critical role in traffic mobility optimization. 
	Existing research has shown that the frequent stop-and-go and idling behavior of individual vehicles when approaching the intersection is the main cause of traffic congestion and casualties~\cite{lee2012development, rakha2001estimating}. Accordingly, trajectory optimization for individual vehicles at the intersection has attracted significant attention.
	In particular, the emergence of Connected and Automated Vehicles (CAVs) has provided new opportunities for improving the traffic performance at intersections. Compared to traditional human-driven vehicles (HDVs), CAVs can acquire accurate information of surrounding traffic participants and traffic signal phase and timing (SPAT) through vehicle-to-vehicle and vehicle-to-infrastructure communication~\cite{contreras2017internet}, and their velocity trajectories in approaching intersections can be directly optimized in the pursuit of higher traffic efficiency and lower fuel consumption. Multiple methods have been applied to CAV control at signalized intersections, including model predictive control~\cite{asadi2011predictive,yang2017eco}, fuzzy logic~\cite{Milanes10,onieva2015multi} and optimal control~\cite{li2015eco,jiang2017eco}. Moreover, the potential of cooperative control of traffic signals and CAVs has also been recently discussed~\cite{xu2018cooperative}.
	The aforementioned research mainly considered a fully-autonomous scenario---the market penetration rate (MPR) of CAVs is 100\%. In practice, however, it might take decades for all the HDVs in current transportation systems to be transformed into CAVs. Instead, a more practical scenario in the near future is a mixed traffic system where CAVs and HDVs coexist~\cite{zheng2020smoothing}. Existing research on mixed traffic intersections mostly focused on the estimation of traffic states and optimization of traffic signals~\cite{Priemer09, Salman17, zheng2017estimating, feng2015real}. For example, Priemer \etal~utilized dynamic programming to estimate the queuing length in the mixed traffic environment and then optimized the traffic phase time based on the estimated results~\cite{Priemer09}. Some other methods, including fuzzy logic~\cite{Salman17}, expectation maximization~\cite{zheng2017estimating} and dynamic programming~\cite{feng2018spatiotemporal}, have also been exploited to achieve a similar goal. To improve the estimation accuracy, the celebrated results on microscopic car-following models have been recently employed to describe the behaviors of HDVs, including Gipps Model~\cite{gipps1981behavioural}, Optimal Velocity Model (OVM)~\cite{helbing1998generalized} and Intelligent Driver Model (IDM)~\cite{treiber2013traffic}; see, \eg,~\cite{du2017coordination,fang2020trajectory}.
	
	Despite these existing works, the topic of CAV control, \ie, trajectory optimization of CAVs, in mixed traffic intersections has not been fully discussed. To tackle this problem, several works regarded HDVs as disturbances in the control of CAVs~\cite{du2017coordination,ala2016modeling}, or focused on the task of collision avoidance based on the prediction of HDVs' behaviors~\cite{jiang2017eco,ma2017parsimonious}. It is worth noting that most of these methods were limited to improving the performance of CAVs themselves in their optimization frameworks, instead of optimizing the global traffic flow consisting of both HDVs and CAVs at the intersection. Two notable exceptions are in~\cite{zhao2018platoon,liang2019joint}, which attempted to improve the performance at the signalized intersection from the perspective of the so-called \emph{mixed platoon}. They enumerated several possible formations consisting of HDVs and CAVs, and investigated their effectiveness through small-scale simulation experiments. However, a general and explicit definition of the mixed platoon has not been clarified, and fundamental properties of the mixed platoon at intersections have been less explored. Moreover, a specific optimal control framework for the mixed platoon at intersections with global consideration of improving the entire traffic performance is still lacking.
	
	{In fact, considering the interaction between adjacent vehicles on the same lane, it is easy to understand that velocity trajectories of CAVs could have a certain influence on those of surrounding vehicles, especially the vehicles following behind them. Accordingly, the driving strategies of CAVs could have a direct impact on the performance of the entire mixed traffic intersection; such impact might even be negative when inappropriate CAV strategies are adopted~\cite{ala2016modeling}. By contrast, when taking the performance of entire mixed traffic into explicit consideration, the optimization of CAVs' trajectories could bring further benefits to traffic mobility. Such idea of improving the global traffic performance via controlling CAVs has been recently proposed as \emph{Lagrangian Control} of traffic flow~\cite{stern2018dissipation}, which has been discussed in various traffic scenarios, including closed ring road~\cite{zheng2020smoothing}, open straight road~\cite{wang2020leading}, traffic bottleneck~\cite{vinitsky2018lagrangian}, and non-signalized intersection~\cite{wu2017emergent}. Regarding signalized intersections, the potential of this notion has not been well understood.}

	
	In this paper, we focus on the scenario of a signalized intersection where HDVs and CAVs coexist and aim at improving the performance of the entire mixed traffic intersection through direct control of CAVs. To address this problem, we propose a novel framework which separates the traffic flow into ``$1+n$" microstructures consisting of one leading CAV and $n$ following HDVs. Such microstructure is particularly common in the near future when the MPR is relatively low, which is named as ``$1+n$" mixed platoon. We discuss the possibility of letting the first CAV lead the motion of following $ n $ HDVs in approaching intersections, and show how to enable the CAV to benefit the global traffic mobility in the proposed structure of mixed platoon. First, we present the dynamics model of ``$1+n$" mixed platoon systems based on linearized car-following models and investigate its fundamental properties, including open-loop stability and controllability. Then, we establish the optimal control framework for the mixed platoon and design a hierarchical algorithm to improve the global traffic mobility performance at a signalized intersection. Specifically, our contributions are as follows: 
	
	\begin{enumerate}[(1)]
		\item The notion of the ``$1+n$" mixed platoon is proposed for trajectory optimization of CAVs at signalized intersections in the mixed traffic environment. Rather than enumerating the mixed platoon formations~\cite{zhao2018platoon,liang2019joint}, we provide an explicit definition of the mixed platoon. Based on the linearized dynamics model, we perform a rigorous theoretical analysis of its fundamental properties, including open-loop stability and controllability. Our theoretical results reveal that the ``$1+n$" mixed platoon is always controllable under a very mild condition, regardless of the platoon size $n$.
		\item An optimal control framework is established for the ``$1+n $'' mixed platoon at a constant traffic SPAT intersection scenario. Instead of being limited to optimizing the performance of the CAVs only~\cite{asadi2011predictive,yang2017eco,Milanes10,onieva2015multi,li2015eco,jiang2017eco,du2017coordination,fang2020trajectory,ala2016modeling,ma2017parsimonious}, our optimal control formulation aims at improving the global performance of the entire mixed traffic intersection. Precisely, the velocity deviations and fuel consumption of all the vehicles in the mixed platoon are under explicit consideration. Moreover, unlike~\cite{zhao2018platoon,liang2019joint}, we optimize the terminal velocity setting to maximize the traffic throughput.
		\item Finally, a hierarchical algorithm is proposed to accomplish the optimal control framework of ``$1+n $'' mixed platoons, which can be applied in any MPR of mixed traffic environments. An event-triggered mechanism is designed to avoid potential collisions of adjacent mixed platoons. Large-scale traffic simulations are conducted at multiple traffic volumes and MPRs, and it is observed that the proposed mixed platoon based control method surpasses the traditional intersection control method for single CAV~\cite{asadi2011predictive,yang2017eco} in both traffic efficiency and fuel consumption.
	\end{enumerate}
	
	
	
	The rest of this paper is organized as follows. Section~\ref{Section:ProblemFormulation} introduces the problem of the system modeling for the proposed``$1+n $'' mixed platoon. Section~\ref{Section:Methodology} presents the system dynamics analysis, optimal control framework and algorithm design. The simulation results are shown in Section~\ref{Section:SimulationResult}, and Section~\ref{Section:Conclusion} concludes this paper.
	
	\section{Problem Statement}
	\label{Section:ProblemFormulation}
	In this section, we firstly introduce the scenario setup, then present the dynamical modeling of individual vehicles and the mixed platoon systems.
	
	\begin{figure}[t]
		\centering
		\includegraphics[width=0.65\linewidth]{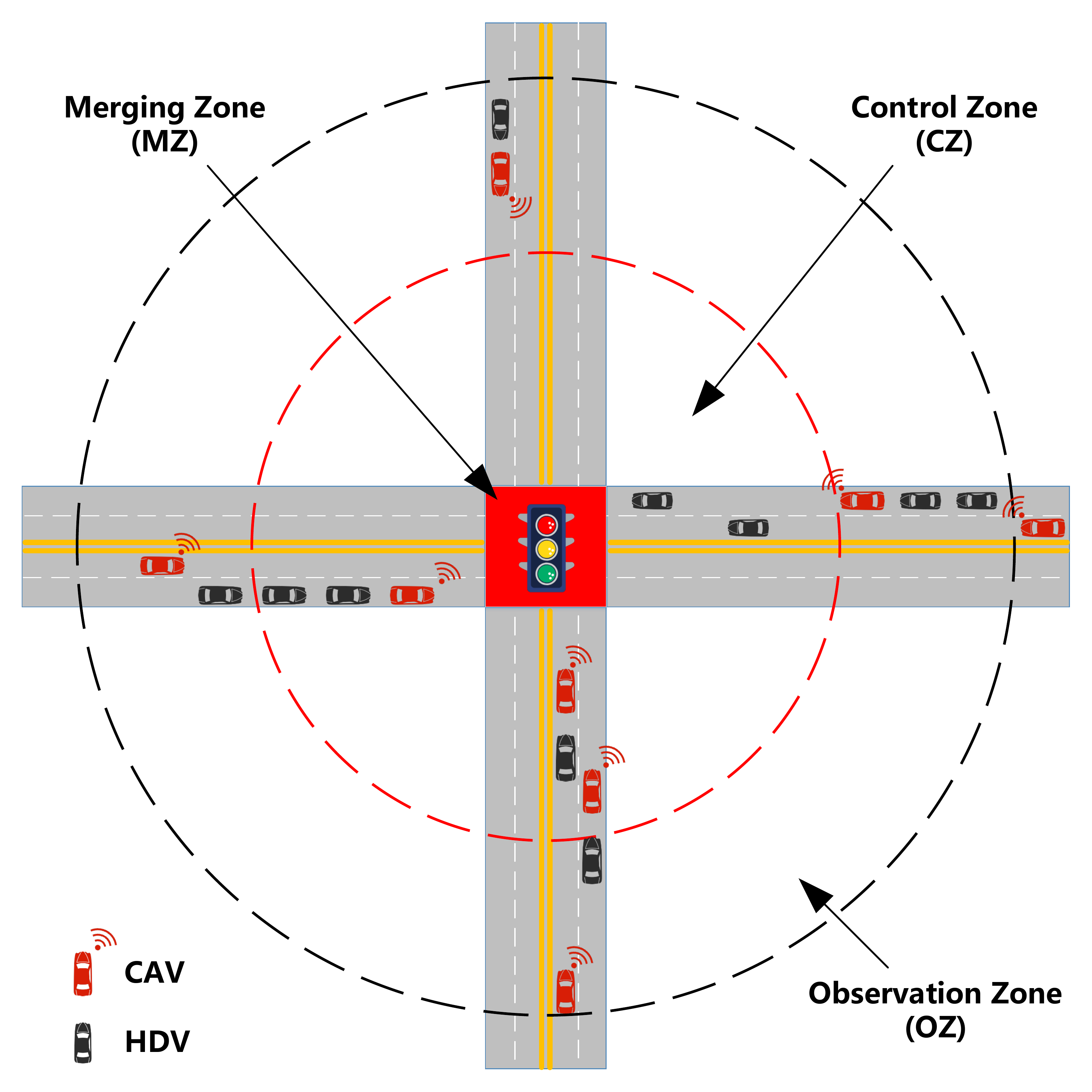}
		\caption{Illustration for the mixed traffic intersection. Red vehicles represent CAVs, which can transmit and receive vehicle information and the vehicle is fully autonomous driving. Black vehicles represent HDVs, which can only transmit ego vehicle information to other vehicles and are controlled by car following model.}
		\label{fig:IntersectionStructure}
	\end{figure}
	
	\subsection{Scenario Setup}
	\label{section:ScenarioSetup}
	
	In this paper, we consider a typical signalized intersection scenario in the mixed traffic environment, where HDVs and CAVs coexist; see Fig.~\ref{fig:IntersectionStructure} for illustration. 
	A traffic light is deployed in the center to guide individual vehicles to drive through the intersection. Note that CAVs follow the instructions from a \emph{central cloud coordinator}, which collects information from all the involved vehicles around the intersection and calculates the optimal velocity trajectories for each CAV. The design of the control strategies for the central cloud coordinator is presented in Section~\ref{Section:Methodology}.

	Motivated by previous works on signalized intersections~\cite{feng2015real,malikopoulos2018decentralized,bian2019cooperation}, we separate the intersection into three zones, as illustrated in Fig.~\ref{fig:IntersectionStructure}. The red square area in the center is named as the Merging Zone (MZ), which is the area of potential lateral collision of the involved vehicles. The ring area between two dashed lines is called the Observation Zone (OZ), where the CAVs and HDVs are allowed to perform lane-changing behaviors. The area between OZ and MZ is the Control Zone (CZ), where the CAVs are under direct control from the central cloud coordinator. The specific range of each zone is discussed in Section~\ref{Section:SimulationResult}.
	
	Similar to existing research~\cite{zhao2018platoon,liang2019joint}, the following assumptions are needed to facilitate the control design for signalized intersection controls, as well as the system modeling and dynamics analysis.
	
	\begin{enumerate}[(1)]
		\item All the vehicles are connected vehicles, which means that both the CAVs and the HDVs are able to transmit their velocity and position to the central cloud coordinator through wireless communication, \emph{e.g.}, V2I communication~\cite{gerla2014internet}. An ideal communication condition without communication delay or packet loss is under consideration.
		
		\item All the CAVs are capable of fully autonomous driving, which follow the velocity trajectories assigned from the central cloud coordinator after they enter CZ. Regarding the HDVs, they are controlled by human drivers, for which we assume a general car-following model to describe their driving behavior (see Section~\ref{Sec:HDVModel} for details).  
		
		\item {Lane changing is not permitted in CZ. As shown in~\cite{ala2016modeling, xu2018distributed}, unexpected lane changing behaviors might worsen the traffic efficiency, especially near the intersection. Therefore, lane changing is only allowed in OZ, while In CZ, we only need to focus on the longitudinal behavior of each vehicle.}
	\end{enumerate}

	\subsection{Dynamical Modeling of Mixed Platoon Systems}
	\label{Sec:HDVModel}
	
	\begin{figure}[!t]
		\centering
		\includegraphics[width=0.7\linewidth]{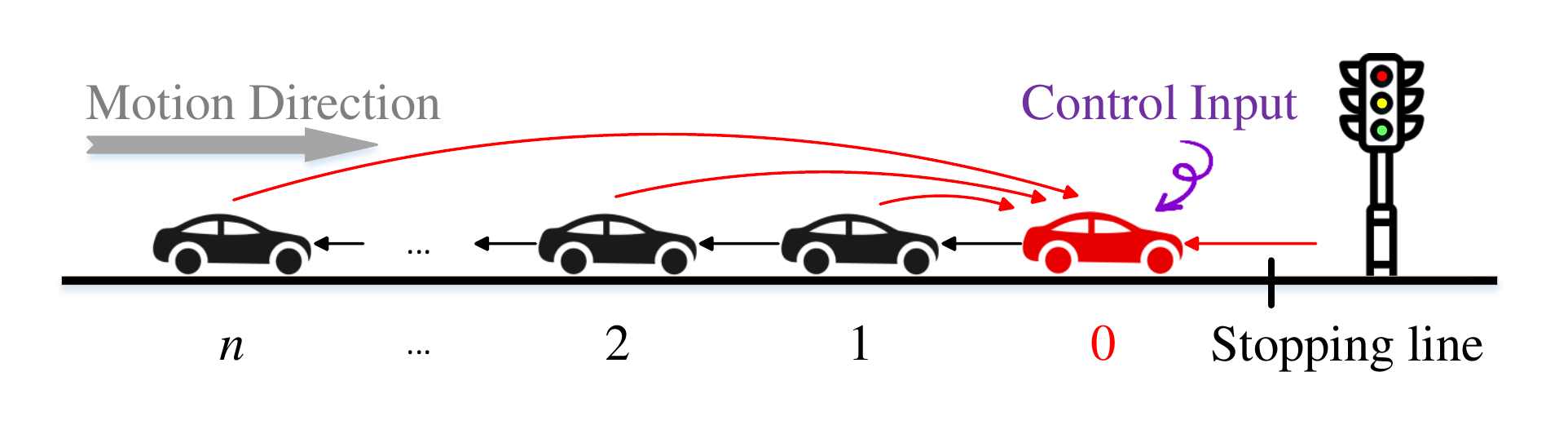}
		\caption{Schematic for the ``$1+n$'' mixed platoon. Red arrows denote the information flow of the leading CAV (colored in red), which collects information from the following HDVs as well as the traffic light and has an external control input. Black arrows represent the information flow of the HDVs (colored in black), which are under control by human drivers and only acquire information from the preceding vehicle, as shown in~\eqref{equ:CFmodel}. {Note that the coordinate origin is set at the stopping line, where the vehicle position is $ x = 0 $.}}
		\label{fig:InformationTopology}
	\end{figure}

	
	In this paper, we propose the notion of ``$1+n$'' mixed platoon as shown in Fig.~\ref{fig:InformationTopology}. It consists of one leading CAV and $n$ following HDVs. Specifically, each CAV is designed as the leading vehicle of the mixed platoon, which leads the motion of the following $n$ HDVs with the aim of improving the performance of the entire mixed platoon when passing the intersection.
	

	Consider the $n$ following HDVs in the mixed platoon illustrated in Fig.~\ref{fig:InformationTopology}. We denote the position and velocity of vehicle $i$ at time $t$ as $x_i(t)$ and $v_i(t)$, respectively. The headway distance of vehicle $i$ from vehicle $i-1$ is defined as ${d}_{i}(t):=x_{i-1}(t)-x_{i}(t) $. Then, $ \dot{v}_{i}(t) $ and $\dot{d}_{i}(t) =v_{i-1}(t)-v_{i}(t)$ represents the acceleration of vehicle $ i $,  and the relative velocity of vehicle $i$ from vehicle $i-1$, respectively.
	
	A great many efforts have been made in previous works to describe the HDVs' car-following dynamics, with several significant models developed, \emph{e.g.}, OVM~\cite{helbing1998generalized} and IDM~\cite{treiber2013traffic}. As shown in the literature~\cite{zheng2020smoothing,wilson2011car}, most of them can be expressed as the following general form ($i=1,\ldots,n$)
	\begin{equation}
		\label{equ:CFmodel}
		\dot{v}_{i}(t) =F\left(d_{i}(t), \dot{d}_{i}(t), v_{i}(t)\right),
	\end{equation}
	which means that the acceleration of vehicle $i$ is determined by its headway distance, relative velocity and its own velocity. 
	
	In this paper, we require that the mixed platoon passes the intersection at a pre-specified equilibrium velocity $v^{\ast}$. In equilibrium traffic state, we have $ \dot{d}_{i}(t) = 0 $ for $i=1,\ldots,n$, and thus each vehicle has a corresponding equilibrium headway distance $ d^{\ast} $, where it holds that
	\begin{equation}
		\label{equ:CFmodel:Stable}
		F\left(d^{\ast}, 0, v^{\ast}\right) = 0.
	\end{equation}
	Then we employ the deviation of the current state $(d_i(t),v_i(t))$ of vehicle $i$ from the equilibrium state $(d^{\ast},v^{\ast})$ as its state variable, given by
	\begin{equation}
		\begin{cases} 
			\label{equ:ErrorState}
			\tilde{d}_{i}(t) =d_{i}(t)-d^{*}, \\ 
			\tilde{v}_{i}(t) =v_{i}(t)-v^{*}. 
		\end{cases}
	\end{equation}
	
	Applying the first-order Taylor expansion to~\eqref{equ:CFmodel} leads to the linearized dynamics of HDVs around the equilibrium state $(d^{\ast},v^{\ast})$, given as follows ($i=1,\ldots,n$)
	\begin{equation}
		\label{equ:ErrorSpeedDiff}
		\begin{cases} 
			\dot{\tilde{d}}_i(t)= v_{i-1}(t)-v_i(t),\\
			\dot{\tilde{v}}_{i}(t) =\alpha_{1} \tilde{d}_{i}(t)-\alpha_{2} \tilde{v}_{i}(t)+\alpha_{3} \tilde{v}_{i-1} (t),
		\end{cases}
	\end{equation}
	with
	$
	\label{equ:ErrorSpeedDiff2}
	\alpha_{1}=\frac{{\partial} F}{{\partial} d},\,
	\alpha_{2}=\frac{{\partial} F}{{\partial} \dot{d}}-\frac{{\partial} F}{{\partial} v},\, 
	\alpha_{3}=\frac{{\partial} F}{{\partial} \dot{d}},
	$
	evaluated at the equilibrium state $(d^{\ast},v^{\ast})$. For trade-off between model fidelity and computational tractability, we consider the OVM model~\cite{helbing1998generalized} as the specific car-following model in our optimal control formulation in Section~\ref{Section:OptimalControlMode}. Note that other HDV models, \eg, IDM~\cite{treiber2013traffic}, can also be applied to derive the specific expression for~\eqref{equ:ErrorSpeedDiff}. In OVM, the general expression~\eqref{equ:CFmodel} of HDVs' car-following dynamics becomes
		\begin{equation}
			\label{equ2:OVM_a}
			\dot{v}_{i}=\kappa[V_{\mathrm{des}}({d}_{i})-v_i],
		\end{equation}
		where $ V_{\mathrm{des}} $ is the driver's desired velocity at headway distance $ {d}_{i} $, given by
		\begin{equation}
			\label{equ2:OVM_vopt}
			V_{\mathrm{des}}({d}_{i})=V_1+V_2{\tanh}[C_1({d}_{i}-L_\mathrm{veh})-C_2],
		\end{equation}
		with $ L_{\mathrm{veh}} $ denoting the vehicle length and the rest of the symbols are constants. In this case, the specific value of the coefficients in~\eqref{equ:ErrorSpeedDiff} can be calculated by
		\begin{equation}
			\label{equ2:OVM_vopt_alphai}
			\alpha_{1} = {\kappa}V_2C_1\left\{1-{\tanh}^2[C_1({d}_{i}-L_\mathrm{veh})-C_2]\right\}, \alpha_{2} = \kappa, \alpha_{3} = 0.
		\end{equation}
	
	Regarding the leading CAV, indexed as vehicle $0$, its acceleration signal is utilized as the control input $u(t)$. Then, the longitudinal dynamics of the leading vehicle can be expressed as the following second-order form
	\begin{equation}
		\begin{cases} 
			\label{equ:CAV_State}
			\dot{x}_{0}(t) =v_{0}(t), \\ 
			\dot{v}_{0}(t) =u(t) .
		\end{cases}
	\end{equation}
	
	It is worth noting the acceleration signal of the leading CAV is also the only external control input of the entire system of the mixed platoon illustrated in Fig.~\ref{fig:InformationTopology}. 
	Lumping the state of both the leading CAV and the following HDVs yields the state vector of the entire mixed platoon system, given as follows
	\begin{equation}
		\label{equ:Platoon_state}
		{X}(t)
		=\left[
		\begin{matrix}
			x_0(t) & v_0(t) & \tilde{d}_1(t) & \tilde{v}_1(t) &\dots & \tilde{d}_{n}(t) & \tilde{v}_{n}(t)
		\end{matrix}
		\right]^ {\rm T}.
	\end{equation}
	
	Based on~\eqref{equ:ErrorSpeedDiff} and~\eqref{equ:CAV_State}, the state-space model of the mixed platoon system is obtained
	\begin{equation}
		\label{equ:MixedPlatoon}
		\dot{{X}}(t)={A} {X}(t)+{B} {u}(t),
	\end{equation}
	where (${A} \in \mathbb{R}^{(2n+2) \times(2n+2)}, \, {B} \in \mathbb{R}^{(2n+2) \times 1}$)
	\begin{equation*}
		\label{equ:MatrixA}
		{A}=\left[ \begin{array}{cccccc}
			{{C}_{1}} & {0} & {\cdots} & {\cdots} & {0} & 0 \\ 
			{{A}_{2}} & {{A}_{1}} & {0} & {\cdots} & {\cdots} & {0} \\ 
			{0} & {{A}_{2}} & {{A}_{1}} & {0} & {\cdots} & {0} \\ 
			{\vdots} & {\ddots} & {\ddots} & {\ddots} & {\ddots} & {\vdots} \\ 
			{0} & {\cdots} & {0} & {{A}_{2}} & {{A}_{1}} & {0} \\ 
			{0} & {\cdots} & {\cdots} & {0} & {{A}_{2}} & {{A}_{1}}
		\end{array}\right],\;
		{B}=\left[ \begin{array}{c}
			B_1\\B_2\\B_2\\ \vdots \\B_2\\B_2
		\end{array}\right],
	\end{equation*}
	with
	\begin{equation*}
		\label{equ:MatrixC}
		{A}_{1}=\left[ \begin{array}{cc}
			{0} & {-1} \\ 
			{\alpha_{1}} & {-\alpha_{2}}
		\end{array}\right], \,
		{A}_{2}=\left[ \begin{array}{cc}
			{0} & {1} \\ 
			{0} & {\alpha_{3}}
		\end{array}\right],\,
		{B}_{1}=\left[ \begin{array}{c}
			{0} \\ 
			1
		\end{array}\right],\,
		{B}_{2}=\left[ \begin{array}{c}
			{0} \\ 
			0
		\end{array}\right],\,
		{C}_{1}=\left[ \begin{array}{cc}
			{0} & {1} \\ 
			{0} & {0}
		\end{array}\right].
	\end{equation*}
	
	
	
	
	
	

	\begin{remark}
		Previous work on control of individual vehicles at signalized intersections mostly focused on the dynamics~\eqref{equ:CAV_State} of CAVs only; see, \eg,~\cite{yang2017eco,li2015eco, jiang2017eco}. Considering the interaction between neighboring vehicles, in this paper we focus on the dynamics~\eqref{equ:MixedPlatoon} of the entire mixed platoon system consisting of both the CAV and its following HDVs, and seek to improve the overall performance via controlling the CAV only. A similar idea has been recently proposed by Stern \etal~\cite{stern2018dissipation}, known as \emph{Lagrangian Control} of traffic flow, where CAVs are utilized as \emph{mobile actuators} to control the entire mixed traffic system. The effectiveness of this notion has been validated in various scenarios, including closed ring road~\cite{zheng2020smoothing}, open straight road~\cite{wang2020leading}, traffic bottleneck~\cite{vinitsky2018lagrangian}, and nonsignalized intersection~\cite{wu2017emergent}. To the best of our knowledge, the feasibility of this notion has not been explicitly discussed in the scenario of signalized intersections.
	\end{remark}
	
	\section{Methodology}
	\label{Section:Methodology}
	Firstly we analyze the open-loop stability and controllability of the proposed ``$ 1+n $'' mixed platoon systems. Based on the derived stability and controllability conditions, an optimal control framework is proposed to optimize the CAVs' driving strategy in a single mixed platoon. Finally, an event-triggered algorithm is established to solve the collision problem of different mixed platoons.
	
	
	\subsection{Open-Loop Stability Analysis}
	\label{Section:FundamentalProperties:Stability}
	
	Based on the model~\eqref{equ:Platoon_state} of the mixed platoon system, we first consider its open-loop stability as shown in Definition~\ref{def:Stability}, when the leading CAV has no external control input, \ie, $u(t)=0$.
	
	{	\begin{definition}[Lyapunov Stability~\cite{skogestad2007multivariable}]
			\label{def:Stability}
			For a general dynamical system $\dot{x} = f(x(t))$, the equilibrium $x_e$ is said to be Lyapunov stable, if $\forall \epsilon>0$, there exists a $\delta >0$ such that, if $\Vert x(0) - x_e \Vert < \delta$, then for every $t\geq 0$ we have $\Vert x(t) - x_e \Vert < \epsilon$.
		\end{definition}
		Typically, the stability of a nonlinear system $\dot{x} = f(x(t))$ around an equilibrium point is analyzed after system linearization, and the linearized system  $\dot{x} = Ax(t) $ is (asymptotically) stable if and only if all the eigenvalues of $A$ have negative real parts.} Existing research have revealed the stability criterion of the linearized car-following model~\eqref{equ:ErrorSpeedDiff} of one single HDV, shown in Lemma~\ref{equ2:Steady}. 
	
	\begin{lemma}[\cite{wilson2011car}]
		\label{equ2:Steady}
		The linearized car-following model~\eqref{equ:ErrorSpeedDiff} is stable if and only if $\alpha_{1}>0,\alpha_{2}>0$.
	\end{lemma}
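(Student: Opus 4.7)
The plan is to reduce the stability question to the spectrum of the $2\times 2$ system matrix that governs the homogeneous dynamics of a single HDV, and then apply an elementary Routh--Hurwitz argument.

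First, I would set $\tilde v_{i-1}(t)\equiv 0$ (since Lemma~\ref{equ2:Steady} concerns a single isolated linearized follower) and rewrite~\eqref{equ:ErrorSpeedDiff} in compact state-space form. With state $[\tilde d_i(t),\tilde v_i(t)]^{\rm T}$, the dynamics become $\dot X_i(t) = A_1 X_i(t)$, where $A_1$ is exactly the block already defined in the paper,
\begin{equation*}
A_1=\begin{bmatrix} 0 & -1 \\ \alpha_1 & -\alpha_2 \end{bmatrix}.
\end{equation*}
By the linearization principle invoked just after Definition~\ref{def:Stability}, Lyapunov stability of~\eqref{equ:ErrorSpeedDiff} is equivalent to all eigenvalues of $A_1$ having negative real part.

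Next, I would compute the characteristic polynomial
\begin{equation*}
p(\lambda) \;=\; \det(\lambda I - A_1) \;=\; \lambda^{2} + \alpha_2 \lambda + \alpha_1 .
\end{equation*}
For a monic real quadratic $\lambda^{2}+a\lambda+b$, the Routh--Hurwitz criterion (or an inspection of the roots $(-a\pm\sqrt{a^{2}-4b})/2$) gives that both roots have strictly negative real part if and only if $a>0$ and $b>0$. Applying this with $a=\alpha_2$ and $b=\alpha_1$ yields the claimed equivalence: the linearized model is (asymptotically) stable iff $\alpha_1>0$ and $\alpha_2>0$.

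Finally, I would cover the two directions explicitly for clarity: sufficiency follows because $\alpha_1,\alpha_2>0$ forces both eigenvalues into the open left half-plane, so the solution decays and Lyapunov stability in the sense of Definition~\ref{def:Stability} holds; necessity follows because if either coefficient is non-positive then $p(\lambda)$ has a root with non-negative real part, producing a trajectory that fails the $\epsilon$--$\delta$ condition. There is no real obstacle here---the only subtle point is justifying the reduction to $A_1$, i.e., that stability of the single-follower block is the right notion of ``stability'' targeted by Lemma~\ref{equ2:Steady}; I would make this explicit in one sentence before invoking Routh--Hurwitz, since the broader platoon analysis in Section~\ref{Section:FundamentalProperties:Stability} will later build on this elementary building block.
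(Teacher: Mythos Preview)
Your argument is correct and is essentially the one the paper relies on: the paper does not give a self-contained proof of Lemma~\ref{equ2:Steady} (it is cited from~\cite{wilson2011car}), but in the appendix proof of Theorem~\ref{Th:Stability} the same characteristic polynomial $\lambda^{2}+\alpha_{2}\lambda+\alpha_{1}$ is extracted and the conclusion $\alpha_{1}>0,\ \alpha_{2}>0$ is drawn by exactly the elementary root/Routh--Hurwitz reasoning you describe. Your reduction to the $2\times 2$ block $A_{1}$ and the subsequent quadratic analysis match the paper's implicit treatment.
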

	
	Regarding the ``$ 1+n $'' mixed platoon system, where there exist $n$ following HDVs, we have the following result. 
	\begin{theorem} 
		\label{Th:Stability}
		The ``$1+n$'' mixed platoon system~\eqref{equ:MixedPlatoon} is open-loop stable if and only if $\alpha_{1}>0, \alpha_{2}>0$, which is irrelevant to the platoon size $n$.
	\end{theorem}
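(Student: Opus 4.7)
The plan is to exploit the block lower-triangular structure of the state matrix $A$ so that the characteristic polynomial factors into contributions from the leading CAV block $C_1$ and from $n$ identical copies of the following-vehicle block $A_1$, and then reduce the stability condition to the single-HDV criterion of Lemma~\ref{equ2:Steady}.

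First, I would observe that $A$ is block lower-triangular with the $2\times2$ block $C_1$ in the top-left corner and $n$ repeated copies of $A_1$ along the remaining diagonal (with $A_2$ on the sub-diagonal, which does not affect the spectrum). Consequently, the characteristic polynomial factors as
\begin{equation*}
\det(\lambda I - A) \;=\; \det(\lambda I - C_1)\cdot\bigl[\det(\lambda I - A_1)\bigr]^{n},
\end{equation*}
so the spectrum of $A$ is the union of the spectrum of $C_1$ and the spectrum of $A_1$ (each eigenvalue of $A_1$ having algebraic multiplicity $n$). This is precisely the reason the platoon size $n$ drops out of the stability condition.

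Second, I would analyze each factor separately. The block $C_1$ captures the kinematic relation $\dot{x}_0 = v_0$, $\dot{v}_0 = 0$ of the uncontrolled leading CAV; since it is decoupled from the error states $(\tilde{d}_i,\tilde{v}_i)$, its two zero eigenvalues correspond to the trivial translation/velocity modes of the leading CAV and do not affect Lyapunov stability of the equilibrium in the error coordinates (a remark I would make explicitly, in the spirit of Definition~\ref{def:Stability}). The substantive factor is therefore $\det(\lambda I - A_1) = \lambda^{2} + \alpha_2 \lambda + \alpha_1$, whose roots have strictly negative real parts if and only if $\alpha_1>0$ and $\alpha_2>0$, by the Routh--Hurwitz test for second-order polynomials; this is exactly the single-HDV criterion of Lemma~\ref{equ2:Steady}.

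Finally, combining the two factorizations gives that every non-trivial eigenvalue of $A$ lies in the open left half-plane if and only if $\alpha_1>0$ and $\alpha_2>0$, and this condition is \emph{independent of $n$}, establishing the claim. I anticipate that the main subtlety, rather than any heavy computation, will be to justify cleanly that the zero eigenvalues arising from $C_1$ do not spoil the stability conclusion—this can be handled by noting that the $(x_0,v_0)$ coordinates do not enter the equilibrium of the error dynamics and that the corresponding modes are uncontrolled but harmless translations. Beyond that, the argument is essentially an eigenvalue computation made trivial by the block-triangular structure, so I do not expect further obstacles.
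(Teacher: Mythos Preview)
Your approach is correct and is actually cleaner than the paper's. The paper proves the result by mathematical induction on $n$: it computes the characteristic polynomial explicitly for $n=1$, then for the inductive step writes $A_{n=k+1}$ as a $2\times 2$ block matrix bordering $A_{n=k}$ and applies the Schur-complement determinant identity $\det P=\det(A-BD^{-1}C)\det D$ to obtain $\det(\lambda I-A_{n=k+1})=\det(\lambda I-A_{n=k})(\lambda^{2}+\alpha_{2}\lambda+\alpha_{1})$. You instead observe once that $A$ is block lower-triangular with diagonal blocks $C_{1},A_{1},\ldots,A_{1}$, so $\det(\lambda I-A)=\det(\lambda I-C_{1})\,[\det(\lambda I-A_{1})]^{n}$ directly, with no induction needed. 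Both routes land on the same quadratic factor and hence the same Routh--Hurwitz condition; yours is shorter and makes the irrelevance of $n$ immediate, while the paper's induction is more laborious but perhaps more explicit about how each additional HDV contributes one more copy of $\lambda^{2}+\alpha_{2}\lambda+\alpha_{1}$.

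One caveat worth tightening in your write-up (the paper is equally loose here): the block $C_{1}$ is a nontrivial $2\times 2$ Jordan block at $\lambda=0$, so strictly speaking the full state $X(t)$ is \emph{not} Lyapunov stable at any equilibrium point---$x_{0}(t)$ drifts linearly whenever $v_{0}(0)\neq 0$. Your parenthetical remark that the zero modes ``do not affect Lyapunov stability of the equilibrium in the error coordinates'' is the right fix: the substantive claim is that the $(\tilde{d}_{i},\tilde{v}_{i})$ subsystem is asymptotically stable iff $\alpha_{1}>0,\alpha_{2}>0$, and the $(x_{0},v_{0})$ coordinates are uncontrolled integrator states that sit outside the error dynamics. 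Make that restriction explicit rather than leaving it as an aside, since Definition~\ref{def:Stability} as stated applies to the full $X$.
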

	
	%
	
	{The proof of Theorem~\ref{Th:Stability} is shown in~\ref{sec:Appendix}. Note that there always exist two zero eigenvalues in $A$, as shown in~\eqref{equ2:n2:EigenValue}, and thus the mixed platoon system~\eqref{equ:MixedPlatoon} is Lyapunov stable, but not asymptotically stable. It can be easily seen that the zero eigenvalues are brought by the states of the leading CAV in the open-loop case. Indeed, when $\alpha_{1}>0, \alpha_{2}>0$, the subsystem consisting of the states of the following HDVs is strictly asymptotically stable, while the mixed platoon system is Lyapunov stable.}
	
	{Recall that the specific value of the linear coefficients $\alpha_i \,(i=1,2,3)$ under the OVM model~\eqref{equ2:OVM_a} has been derived in~\eqref{equ2:OVM_vopt_alphai}. According to Theorem~\ref{Th:Stability}, it is straightforward to obtain the stability condition for the OVM model after linearization: $ {\kappa} > 0, V_2C_1 > 0 $. Note that string stability is also an important topic for CAV's longitudinal control, which describes the propagation of the perturbations in a string of vehicles. Existing research for string stability of mixed platoons mostly focuses on one special case where one CAV is following at the tail behind a string of HDVs; see, \ie, \cite{wu2018stabilizing,jin2014dynamics}. Interested readers are referred to~\cite{wang2020leadingSCS} for further analysis on the proposed ``$ 1+n $" mixed platoon system and more general cases.}
	
	\subsection{Controllability Analysis}
	\label{Section:FundamentalProperties:Controllability}
	
	One goal of the control of the mixed platoon is to pass the intersection with a pre-specified equilibrium velocity, and controllability is a fundamental property to depict the feasibility of this goal. Particularly, if controllability holds for the mixed platoon, then the mixed platoon system can be moved to any desired state under the control input of the CAV. The formal definition and one useful criterion of controllability are shown as follows. 
	
	
	\begin{definition}[Controllability~\cite{skogestad2007multivariable}]
		\label{def:Controllability}
		The dynamical system $ \dot{x}(t)=Ax(t)+Bu(t) $, or the pair $ (A,B) $, is controllable if and only if, for any initial state $ x(0)=x_0 $, any time $ t_f>0 $ and any final state $ x_f $, there exists an input $ u(t) $ such that $ x(t_f)=x_f $.
	\end{definition}

	\begin{lemma}[Popov-Belevitch-Hautus criterion~\cite{skogestad2007multivariable}]
		\label{equ2:Control:PBH}
		In a continuous-time LTI system $({A},{B})$ of size $n$, the system is controllable if and only if for every eigenvalue $ \lambda $, $ {\rm rank}\left({\lambda}{I} - {A},{B}\right) = n$.
		System $({A},{B})$ is uncontrollable if and only if there exists $ {\rho} \neq 0$, such that ${\rho}^{\top} {A}=\lambda {\rho}^{\top}$, ${\rho}^{\top} {B}=0$.
	\end{lemma}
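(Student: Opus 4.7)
The plan is to derive the Popov--Belevitch--Hautus criterion from Kalman's controllability rank condition, which states that $(A,B)$ is controllable if and only if the controllability matrix $\mathcal{C} := [B,\, AB,\, \ldots,\, A^{n-1}B]$ has rank $n$. The lemma asserts two equivalent characterizations of (un)controllability---a rank test at eigenvalues and a left-eigenvector test---so I would first show the two formulations coincide and then connect them to the Kalman condition.

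First, I would observe that if $\lambda$ is not an eigenvalue of $A$, then $\lambda I - A$ already has rank $n$, and so $[\lambda I - A,\, B]$ trivially has full row rank; hence the rank test only matters at eigenvalues. Next, rank deficiency of $[\lambda I - A,\, B]$ is equivalent to the existence of a nonzero row vector $\rho^{\top}$ in its left null space, i.e. $\rho^{\top}(\lambda I - A) = 0$ and $\rho^{\top}B = 0$, which is exactly $\rho^{\top}A = \lambda \rho^{\top}$ and $\rho^{\top}B = 0$. This shows the two formulations in the lemma coincide, so it remains to prove equivalence with controllability using just the left-eigenvector form.

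For the easy (contrapositive) direction, suppose a nonzero $\rho$ satisfies $\rho^{\top}A = \lambda \rho^{\top}$ and $\rho^{\top}B = 0$. Then a short induction gives $\rho^{\top}A^{k}B = \lambda^{k}\rho^{\top}B = 0$ for every $k\geq 0$, so $\rho^{\top}\mathcal{C} = 0$; hence $\operatorname{rank}(\mathcal{C}) < n$ and $(A,B)$ is uncontrollable by Kalman's criterion.

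For the converse, I would assume $(A,B)$ is uncontrollable, so the controllable subspace $\mathcal{V} := \operatorname{Im}(\mathcal{C})$ is a proper $A$-invariant subspace of $\mathbb{C}^{n}$ containing $\operatorname{Im}(B)$. Choosing a basis whose first $\dim \mathcal{V}$ vectors span $\mathcal{V}$ yields the Kalman controllability decomposition, in which $A$ becomes block upper triangular with an uncontrollable lower diagonal block $A_{\bar{c}}$ of positive size and $B$ acquires a zero lower block. Taking any left eigenvector of $A_{\bar{c}}$ with eigenvalue $\lambda$ and lifting it through the basis change produces the desired $\rho$. The main obstacle is precisely this converse step: one must justify the block decomposition, but this reduces to the purely algebraic facts that $A\mathcal{V}\subseteq \mathcal{V}$ and $\operatorname{Im}(B)\subseteq \mathcal{V}$, both of which are immediate from the definition of $\mathcal{C}$, after which the decomposition is a routine change-of-basis computation.
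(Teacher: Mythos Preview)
Your argument is correct and follows the standard textbook derivation of the PBH test from Kalman's rank condition. Note, however, that the paper does not prove this lemma at all: it is stated as a cited result from \cite{skogestad2007multivariable} and used as a black box in the proof of Theorem~\ref{Th:Controllability}. So there is no ``paper's own proof'' to compare against; you have simply supplied the omitted justification, and your sketch (reducing the rank test to the left-eigenvector form, then handling both directions via $\rho^{\top}A^{k}B=\lambda^{k}\rho^{\top}B$ and the Kalman controllability decomposition) is exactly the standard one.
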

	
	Our result regarding the controllability of the $1+n$ mixed platoon is as follows.
	\begin{theorem}
		\label{Th:Controllability}
		The ``$1+n$'' mixed platoon system~\eqref{equ:MixedPlatoon} is controllable when the following condition holds, which is irrelevant to the platoon size $n$.
		\begin{equation}
			\label{equ:ControllabilityCondition}
			\alpha_{1}-\alpha_{2} \alpha_{3}+\alpha_{3}^{2} \neq 0.
		\end{equation}
	\end{theorem}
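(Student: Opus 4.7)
The natural tool here is the Popov–Belevitch–Hautus criterion from Lemma~\ref{equ2:Control:PBH}: I will try to show that under the condition $\alpha_1 - \alpha_2\alpha_3 + \alpha_3^2 \neq 0$, there is no nonzero $\rho$ satisfying $\rho^{\top}A = \lambda \rho^{\top}$ and $\rho^{\top}B = 0$. The plan is to exploit the block lower-triangular structure of $A$ by partitioning $\rho \in \mathbb{R}^{2n+2}$ into blocks $\rho_0, \rho_1, \ldots, \rho_n \in \mathbb{R}^2$. The constraint $\rho^{\top}B = 0$ immediately forces the second component of $\rho_0$ to vanish, since $B$ is supported only in that coordinate via $B_1$. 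A useful preliminary observation is that $A_2 = (1,\alpha_3)^{\top} e_2^{\top}$ has rank one, so for any $\rho_{j+1}$ the quantity $\rho_{j+1}^{\top} A_2$ is a scalar multiple of $e_2^{\top}$ with coefficient $\rho_{j+1}^{(1)} + \alpha_3 \rho_{j+1}^{(2)}$.

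The equation $\rho^{\top}A = \lambda \rho^{\top}$ then decomposes into a chain: $\rho_n^{\top}(A_1 - \lambda I) = 0$ at the tail, $\rho_j^{\top}(A_1-\lambda I) = -\rho_{j+1}^{\top}A_2$ for $1 \le j \le n-1$, and $\rho_0^{\top}(C_1 - \lambda I) = -\rho_1^{\top}A_2$ at the head. I will split into cases by whether $\lambda$ belongs to the spectrum of $A_1$, whose characteristic polynomial is $\lambda^2 + \alpha_2\lambda + \alpha_1$. When $\lambda \notin \sigma(A_1)$, the invertibility of $A_1 - \lambda I$ gives $\rho_n = 0$, and a downward induction along the chain forces $\rho_j = 0$ for all $j \geq 1$; the head equation then reduces to $\rho_0^{\top}(C_1-\lambda I) = 0$, which together with $\rho_0^{(2)}=0$ yields $\rho_0=0$ (noting that $\alpha_1 \neq 0$, which is guaranteed by the stability context, rules out $\lambda = 0$ being an eigenvalue of $A_1$).

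The delicate case, and the one I expect to be the main obstacle, is when $\lambda \in \sigma(A_1)$. Here the left null space of $A_1 - \lambda I$ is one-dimensional and spanned by $(-(\alpha_2+\lambda),\, 1)$, so $\rho_n = t_n(-(\alpha_2+\lambda),\, 1)$ for some scalar $t_n$. Substituting into the $\rho_{n-1}$ equation, the right-hand side becomes $-t_n(\alpha_3-\alpha_2-\lambda)\,e_2^{\top}$, which must lie in the one-dimensional row space of $A_1 - \lambda I$, spanned by $(-\lambda,-1)$. For a nonzero eigenvalue $\lambda$ this compatibility forces either $t_n=0$, in which case I inductively reduce to the previous case and collapse every block to zero, or $\lambda = \alpha_3 - \alpha_2$. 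Combining the latter with $\lambda^2 + \alpha_2\lambda + \alpha_1 = 0$ produces exactly $\alpha_1 - \alpha_2\alpha_3 + \alpha_3^2 = 0$, contradicting the hypothesis. The hard part is precisely this bookkeeping: tracking how the rank-one structure of $A_2$ propagates a compatibility condition through every level of the chain, and verifying that the single resonant direction $\lambda = \alpha_3 - \alpha_2$ is exactly what the stated scalar condition excludes, so that the argument goes through uniformly for every platoon size $n$.
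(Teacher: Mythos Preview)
Your proposal is correct and follows essentially the same route as the paper: both apply the PBH test, partition the left eigenvector into $2$-blocks along the platoon, and reduce the obstruction to the simultaneous vanishing of $\lambda^{2}+\alpha_{2}\lambda+\alpha_{1}$ and $\alpha_{3}\lambda+\alpha_{1}$ (your compatibility condition $\lambda=\alpha_{3}-\alpha_{2}$ is exactly the latter once the former holds and $\lambda\neq 0$). The only difference is cosmetic---the paper splits into three cases according to which of the two polynomials vanishes, whereas you split into two according to whether $\lambda\in\sigma(A_{1})$---and both arguments tacitly require $\alpha_{1}\neq 0$, which you rightly flag via the stability context.
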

	
	\begin{proof}
		Assume that the mixed platoon system~\eqref{equ:MixedPlatoon} is uncontrollable. According to Lemma~\ref{equ2:Control:PBH}, there exists a scalar $\lambda$ and a non-zero vector $ {\rho} = [\rho_{01},\rho_{02},\rho_{11},\rho_{12},\dots,\rho_{n1},\rho_{n2}]^ {\rm T} $, where $ \rho_{ij}\in \mathbb{R} $, which satisfy
		\begin{equation}
			\rho^ {\rm T}({A}-\lambda {I})=0, \; \rho^ {\rm T}{B} = 0.
		\end{equation}
		From $ \rho^ {\rm T}{B} = 0 $, we have $ \rho_{02} = 0 $. From $\rho^ {\rm T}({A}-\lambda {I})=0$, it is obtained that
		\begin{equation}
			\label{equ2:1:EigenVector}
			\begin{cases}
				{-\lambda \rho_{01}=0}, \\ 
				{\rho_{01}-\lambda \rho_{02}+\rho_{11}+\alpha_{3} \rho_{12}=0},
			\end{cases}
		\end{equation}
		and
		\begin{equation}
			\label{equ2:n:EigenVector}
			-\rho_{n1}-\left(\alpha_{2}+\lambda\right) \rho_{n2}=0,
		\end{equation}
		and for $i=1,\ldots,n$,
		\begin{equation}
			\label{equ2:1_n:EigenVector}
			\begin{cases}
				{-\lambda \rho_{i1}+\alpha_{1} \rho_{i2}=0}, \\ 
				{\rho_{i1}+\left(-\alpha_{2}-\lambda\right) \rho_{i2}+\rho_{(i+1)1}+\alpha_{3} \rho_{(i+1)2}=0}.
			\end{cases}
		\end{equation}
		According to~\eqref{equ2:1_n:EigenVector}, we have
		\begin{equation}
			\label{equ2:After:i}
			\left(\lambda^{2}+\alpha_{2}\lambda+\alpha_{1}\right) \rho_{i1}=\left(\alpha_{3}\lambda+\alpha_{1}\right) \rho_{(i+1)1}, \; i=1,\ldots,n,
		\end{equation}
		and when $ i = 1 $, it holds that $ \lambda\rho_{11} = \alpha_{1}\rho_{12} $. Substituting it into~\eqref{equ2:1:EigenVector} yields
		\begin{equation}
			\label{equ2:After:1}
			\left(\alpha_{3} \lambda+\alpha_{1}\right) \rho_{11}=0.
		\end{equation}
		From~\eqref{equ2:n:EigenVector}, we have
		\begin{equation}
			\label{equ2:After:n}
			\left(\lambda^{2}+\alpha_{2} \lambda+\alpha_{1}\right) \rho_{n 1}=0.
		\end{equation}
		
		It can be easily examined that when $\alpha_{1}-\alpha_{2} \alpha_{3}+\alpha_{3}^{2} \neq 0$, the two equations
		$\lambda^{2}+\alpha_{2} \lambda+\alpha_{1} = 0 $ and $ \alpha_{3}{\lambda}+\alpha_{1} = 0 $ cannot be satisfied simultaneously. Therefore, we consider the following three cases: (1)${\lambda}^{2}+\alpha_{2}\lambda+\alpha_{1} = 0$, $\alpha_{3}{\lambda}+\alpha_{1} \neq 0$; (2)${\lambda}^{2}+\alpha_{2}\lambda+\alpha_{1} \neq 0$, $\alpha_{3}{\lambda}+\alpha_{1} = 0$; (3)${\lambda}^{2}+\alpha_{2}\lambda+\alpha_{1} \neq 0$, $\alpha_{3}{\lambda}+\alpha_{1} \neq 0$. In each case, it can be obtained that $ \rho_{i1}=\rho_{i2}=0 $, $i=0,1,\ldots,n$ by combing $ \rho_{02} = 0 $ and equations~\eqref{equ2:After:i},~\eqref{equ2:After:1},~\eqref{equ2:After:n}. This contradicts the requirement that $ \rho \neq 0 $, which indicates that the assumption does not hold. Therefore, the system $({A},{B})$ is controllable when condition~\eqref{equ:ControllabilityCondition} holds.
	\end{proof}
	
	\begin{remark}
		Theorem~\ref{Th:Controllability} indicates that the ``$1+n$'' mixed platoon is controllable with regard to the control input of the leading CAV when condition~\eqref{equ:ControllabilityCondition} holds. This result indicates that through controlling the leading CAV directly, one has complete control of the motion of the following $n$ HDVs without changing their natural driving behaviors. This property allows for the feasibility of designing the control input of the single CAV with the aim of improving the performance of the entire ``$1+n$'' mixed platoon. Note that condition~\eqref{equ:ControllabilityCondition} is consistent with previous controllability analysis in~\cite{zheng2020smoothing,cui2017stabilizing}, which focused on mixed platoons in a closed ring-road traffic system. Interested readers are referred to~\cite{wang2020controllability} to make further investigations on the controllability property of the ``$1+n$'' mixed platoon when the following $n$ HDVs have heterogeneous dynamics in~\eqref{equ:CFmodel}.
	\end{remark}
	
	\subsection{Optimal Control Framework}
	\label{Section:OptimalControlMode}
	After the dynamical analysis of the fundamental properties of the proposed ``$ 1+n$'' mixed platoon, in the following section, we proceed to establish an optimal control framework for the mixed platoon at the signalized intersection.

	\subsubsection{Cost Function}
	In our optimal control framework of the ``$ 1+n$'' mixed platoon, the main control objective is to let the CAV reach the stopping line of the intersection when the traffic signal turns green, and meanwhile the following HDVs are stabilized at a desired equilibrium velocity $v^{\ast}$, as discussed in~\eqref{equ:CFmodel:Stable}. 
	Moreover, we also aim at minimizing the fuel consumption of the entire mixed platoon during its process of approaching the intersection. Accordingly, we define the following cost function in the Bolza form
	\begin{equation}
		\label{equ2:cost_fuction}
		\newcommand{\ud}{\mathrm{d}}
		J = \varphi({X}(t_\mathrm{f})) + \int_{t_0}^{t_\mathrm{f}}L({X}(t),{u}(t))\ud t,
	\end{equation}
	where $ t_0 $ is the time when CAV enters CZ, \ie, reaches the boundary of CZ and OZ as shown in Fig.~\ref{fig:Algorithm}. And $t_\mathrm{f}$ denote the time when CAV enters CZ, \ie, reaches the stopping line, which will be discussed later in Section~\ref{Section:Algorithm:MixedPlatoon}.

	
	As the terminal cost function in~\eqref{equ2:cost_fuction}, $\varphi({X}(t_f))$ measures the deviation of the system final state from the desired state, which is defined as
	\begin{equation}
		\label{equ2:ternimal_cost_fuction}
		\varphi\big({X}(t_\mathrm{f})\big) = \omega_1\big(x_0(t_\mathrm{f}\big) - x_{\mathrm{tar}})^2 + \omega_2\sum_{i=0}^{n}\big(v_i(t_\mathrm{f}) - v^{\ast}\big)^2,
	\end{equation}
	where $ \omega_1 $ and $ \omega_2 $ denote the weight coefficients for penalty of the position deviation of the leading CAV and the velocity deviation of all the vehicles in the mixed platoon, respectively. $ x_0(t_\mathrm{f}) $ denotes the position of the leading CAV at $ t=t_\mathrm{f} $. $ x_{\mathrm{tar}} $ is the target final position of the leading CAV, which refers to the position of the stopping line at the intersection. The specific choice of the desired equilibrium velocity $ v^{\ast} $ and the target position $ x_{\mathrm{tar}} $ of the CAV will be discussed in Sections~\ref{Sec:Terminal Speed} and~\ref{Sec:Constraints}, respectively. 
	
	In~\eqref{equ2:cost_fuction}, $L\big({X}(t),{u}(t)\big)$ denotes the transient fuel consumption of the mixed platoon at time $t$, which is defined as
	\begin{equation}
		\label{equ2:Fuel_consumption_all}
		L\big({X}(t), {u(t)}\big)=G_{0}(t)+\sum_{i = 1}^{n} G_{i}(t),
	\end{equation}
	where $G_{0}(t)$ and $G_{i}(t)$, $i=1,\ldots,n$ represent the transient fuel consumption of the leading CAV and the following HDVs, respectively. Similar to recent work~\cite{zhao2018platoon, jiang2017eco}, we utilize the Akcelik's fuel consumption model for the specific model to calculate transient fuel consumption~\cite{akcelik1989efficiency}
		\begin{equation}
			\label{equ2:Akcelik}
			G_{i}(t)=\alpha+\beta_{1}P_{T}(t)+\left(\beta_{2} m a_{i}(t)^{2} v_{i}(t)\right)_{a_{i}(t)>0},
		\end{equation}
		where $ m $ is the vehicle mass, and the term $ \left(\beta_{2} m a_{i}(t)^{2} v_{i}(t)\right)_{a_{i}(t)>0} $ represents the extra inertial (engine/internal) drag power in vehicle acceleration. $\alpha$ is the idle fuel consumption rate and $P_{T}$ denotes the total power to drive the vehicle, which contains the engine dragging power, moment of inertia, air friction and other energy loss; it can be computed by
		\begin{equation}
			\label{equ2:Akcelik:Power}
			P_{T}(t)=\max \left\{0, d_{1} v_{i}(t)+d_{2} v_{i}(t)^{2}+d_{3} v_{i}(t)^{3}+ma_{i}(t)v_{i}(t)\right\}.
		\end{equation}
		
		As suggested in~\cite{akcelik1989efficiency}, we consider a typical setup for parameter values in the Akcelik's fuel consumption model~\eqref{equ2:Akcelik} and~\eqref{equ2:Akcelik:Power}, as shown in Table~\ref{tab:Akcelik}.
	
	\begin{remark}
		Note that the minimization of fuel consumption is one typical control objective for control of individual vehicles at the intersection, which is known as the \emph{eco-approaching behavior}~\cite{yang2017eco,li2015eco, jiang2017eco}. However, existing research mostly focused on the behaviors of the CAVs themselves; such consideration might limit the potential of CAVs in improving traffic performance, especially in mixed traffic flow where HDVs also exist. One of the major distinctions in our optimal control framework from previous results~\cite{yang2017eco,li2015eco, jiang2017eco} lies in the explicit consideration of the fuel consumption of both CAVs and HDVs. This framework allows one to improve the fuel economy of the entire mixed traffic intersection via direct control of only CAVs.
	\end{remark}
	
	\begin{table}[t]
		\centering
		\begin{tabular}[c]{cccccccc}
			\toprule
			\multirow{2}{0.09\textwidth}{\centering\textbf{Parameter}} & 
			\multirow{2}{0.09\textwidth}{\centering\textbf{$\bm{\alpha}$\\($\bm{\mathrm{ml/s}}$)}} & 
			\multirow{2}{0.09\textwidth}{\centering\textbf{$\bm{\beta_{1}}$}\\} & 
			\multirow{2}{0.09\textwidth}{\centering\textbf{$\bm{\beta_{2}}$}\\} & 
			\multirow{2}{0.09\textwidth}{\centering\textbf{$ \bm{m} $\\($\bm{\mathrm{kg}}$)}} & 
			\multirow{2}{0.09\textwidth}{\centering\textbf{$\bm{d_{1}}$}\\} & 
			\multirow{2}{0.09\textwidth}{\centering\textbf{$\bm{d_{2}}$}\\} & 
			\multirow{2}{0.09\textwidth}{\centering\textbf{$\bm{d_{3}} $}\\}\\
			& & & & & & & \\
			\midrule
			Value & 0.666 &  0.072  &  0.0344 & 1680 & 0.269 & 0.0171 & 0.000672 \\
			\bottomrule
		\end{tabular}
		\caption{Parameter setup in the fuel consumption model~\eqref{equ2:Akcelik} and~\eqref{equ2:Akcelik:Power}}
		\label{tab:Akcelik}
	\end{table}

	
	\begin{table}[t]
		\centering
		\begin{tabular}[c]{ccccccc}
			\toprule
			\multirow{2}{0.11\textwidth}{\centering\textbf{Parameter}} & 
			\multirow{2}{0.11\textwidth}{\centering\textbf{$\bm{\kappa}$\\($\bm{\mathrm{s^{-1}}}$)}} & 
			\multirow{2}{0.11\textwidth}{\centering\textbf{$\bm{V_1}$\\($\bm{\mathrm{m/s}}$)}} & 
			\multirow{2}{0.11\textwidth}{\centering\textbf{$\bm{V_2}$\\($\bm{\mathrm{m/s}}$)}} & 
			\multirow{2}{0.11\textwidth}{\centering\textbf{$ \bm{C_1} $\\($\bm{\mathrm{m^{-1}}}$)}} & 
			\multirow{2}{0.11\textwidth}{\centering\textbf{$\bm{C_2}$}\\} & 
			\multirow{2}{0.11\textwidth}{\centering\textbf{$\bm{L_\mathrm{veh}}$\\($\bm{\mathrm{m}}$)}}\\
			& & & & & & \\
			\midrule
			Value & 0.85 &  6.75  &  7.91  & 0.13 & 1.57 & 5 \\
			\bottomrule
		\end{tabular}
		\caption{Parameter setup in the OVM model~\eqref{equ2:OVM_a} and~\eqref{equ2:OVM_vopt}}
		\label{tab:OVM-parameter}
	\end{table}
	
	
	\subsubsection{Terminal Velocity}
	\label{Sec:Terminal Speed}
	
	\begin{figure*}[t]
		\centering
		\subfigure[\label{fig:DistanceNumber:a}] {\includegraphics[width=0.4\linewidth]{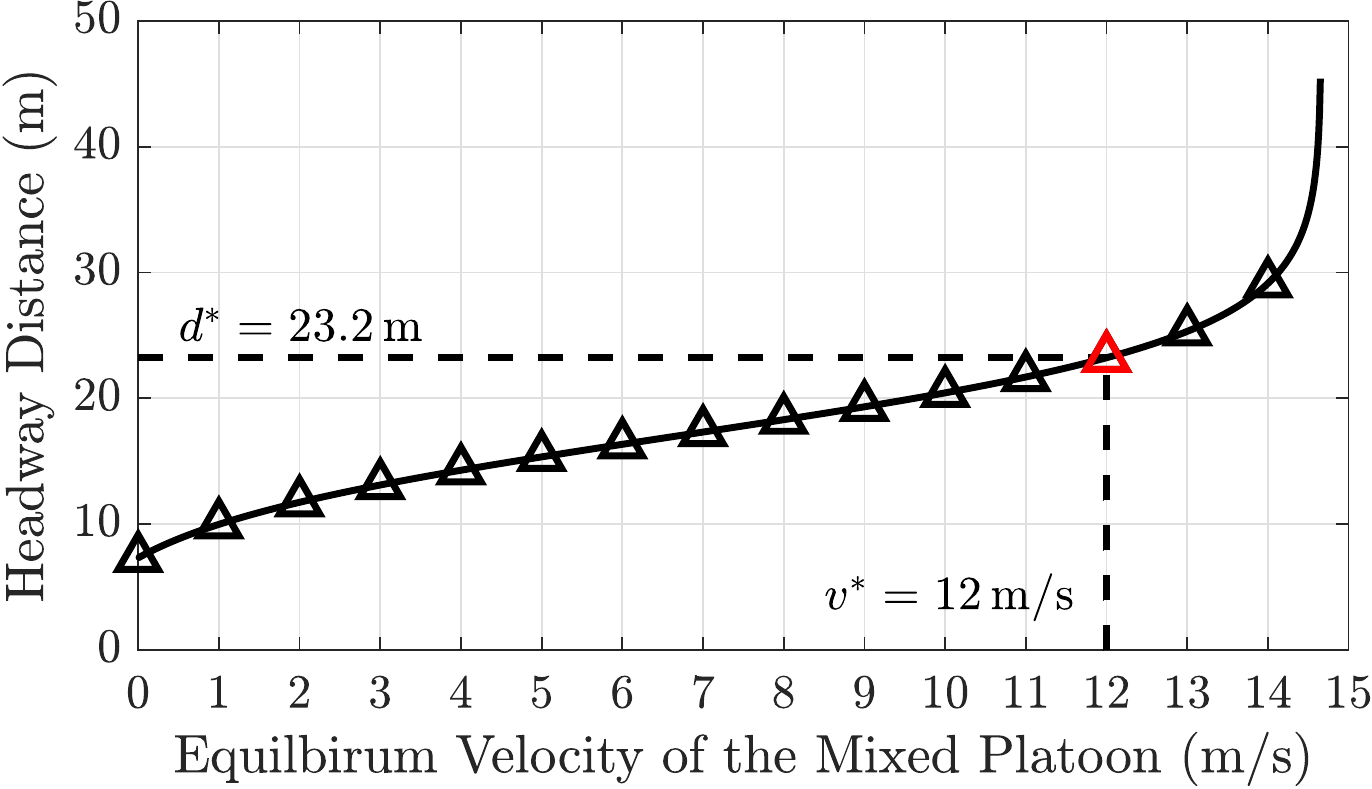}} 
		\hfil
		\subfigure[\label{fig:DistanceNumber:b}] {\includegraphics[width=0.4\linewidth]{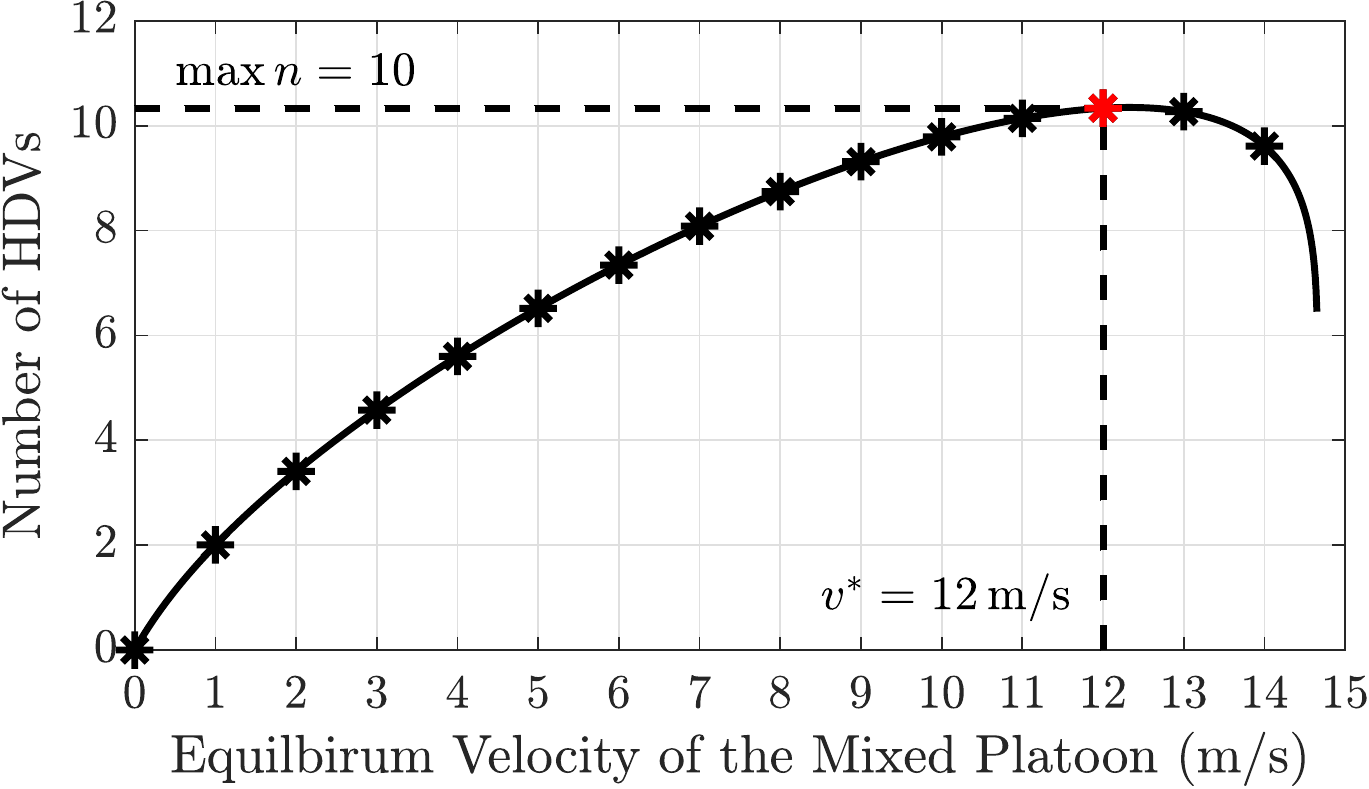}}
		\caption{{Relationship between equilibrium velocity of the mixed platoon (\ie, terminal platoon velocity) and passing number $n$ of HDVs. OVM is taken as the car-following model, whose parameters are shown in Table~\ref{tab:OVM-parameter}. With the increase of equilibrium velocity, the equilibrium headway distance of HDVs increases, while the passing number of HDVs first rises and then drops. Hence, for constant traffic SPAT, there exists a maximum value of equilibrium velocity that maximizes the passing number of HDVs, highlighted as the red point.} \label{fig:DistanceNumber}}
	\end{figure*}
	
	We proceed to discuss how to design the desired equilibrium velocity $v^*$, which also represents the terminal velocity in the terminal cost function~\eqref{equ2:ternimal_cost_fuction}. Existing research mostly focused on the control of the CAV alone, and thus they typically set the terminal velocity of the CAV as the highest limited velocity in order to improve traffic efficiency at the intersection; see, \eg,~\cite{asadi2011predictive,jiang2017eco}. Considering that there might exist other HDVs at the intersection, we reveal that this setup in previous works might not be the optimal choice for the entire mixed intersection. 
	
	When designing the terminal velocity, we aim at maximizing the number of vehicles that can pass the intersection in an equilibrium state during a constant green phase time $ T_{\mathrm{Green}} $. 
	Take one ``$ 1+n$'' mixed platoon for example, \ie, there is one leading CAV and $ n $ following HDVs in the platoon. From the equilibrium equation~\eqref{equ:CFmodel:Stable} in the HDVs' car-following model, it can be inferred that the equilibrium headway distance $ d^* $ relies on the equilibrium velocity $ v^* $. For constant green light phase time, our optimization goal is to maximize the number $ n $ of the following HDVs in $ T_{\mathrm{Green}} $. Accordingly, we have the following result.
		\begin{definition}[Optimal velocity $ v^{*} $]
			\label{Th:TerminalVelocity}
			Consider the ``$ 1+n $'' mixed platoon system consisting of one leading CAV and $n$ following HDVs given by~\eqref{equ:MixedPlatoon}. The optimal equilibrium velocity $ v^{*} $, \ie, the optimal target velocity, is defined as the equilibrium velocity of the mixed platoon which maximizes the passing number $ n $ of the following HDVs during a constant green phase time $ T_{\mathrm{Green}} $. It can be obtained by solving the following optimization problem:
			\begin{equation}
				\label{equ2:T_Green:N}
				\begin{aligned}
					&\arg\max_{v^*} &n = \frac{v^* T_{\mathrm{Green}}}{d^*}, \\
					&\mathrm{subject\; to: } &F(d^*,0,v^*)=0.
				\end{aligned}
			\end{equation}
		\end{definition}
	
	{Recall that we utilize the OVM model~\eqref{equ2:OVM_a} to derive the explicit expression of HDVs' car-following model $F(\cdot)$, with the specific value of the linearized coefficients in~\eqref{equ:ErrorSpeedDiff} calculated by~\eqref{equ2:OVM_vopt_alphai}. Motivated by~\cite{helbing1998generalized}, we consider a typical parameter setup of the OVM model as shown in Table~\ref{tab:OVM-parameter}. Note that both open-loop stability and controllability hold in this parameter setup.} 
	
	{The leading CAV is expected to reach the stopping line when the traffic light turns green.} Accordingly, if the mixed platoon is in equilibrium state, the leading CAV and following HDVs run at the same velocity, \ie, $F(\cdot)= \dot{v}_{i} = 0$ holds for the following HDVs. Thus, we obtain the relationship between velocity and headway distance in equilibrium state based on the OVM model as follows. Substituting $F(\cdot)= \dot{v}_{i} = 0$ into equation~\eqref{equ2:OVM_vopt} yields
	\begin{equation}
		\label{equ2:Steady_distance}
		d_i = \frac{1}{C_1}\left(\arctan\left(\frac{v_i - V_1}{V_2}\right) + C_2\right)  + L_\mathrm{veh}.
	\end{equation}
	
	{In Fig.~\ref{fig:DistanceNumber}, it can be observed that the equilibrium headway distance of HDVs is typically a monotonically increasing function with respect to equilibrium velocity in the equilibrium state. There exists a maximum passing number of HDVs corresponding to the optimal equilibrium velocity $ v^{*} $ and equilibrium headway distance $ d^{*} $. The optimal terminal velocity $ v^{*} $ can be obtained by solving \eqref{equ2:T_Green:N}.}
	
	{
	\begin{remark}
		We make further analysis on the optimal velocity $v^*$. If $ F(d^*,0,v^*)=0 $ leads to an explicit expression of $ d^* = d(v^*) $, solving~\eqref{equ2:T_Green:N} yields 
		\begin{equation}
		\frac{\partial}{\partial{v^*}}\left(\frac{v^*T_{\mathrm{Green}}}{d(v^*)}\right) = 0, 
		\end{equation}
		which leads to
		\begin{equation}
		\label{equ:vstar_Tgreen}
		d(v^*) - d'(v^*)v^* = 0. 
		\end{equation}
		From \eqref{equ:vstar_Tgreen} it can be observed that the optimal velocity $ v^* $ is only related to the car-following model and its equilibrium equation $ F(d^*,0,v^*)=0 $. It has no relationship from the signal phasing time $ T_{\mathrm{Green}} $. By contrast, the maximum passing number $ n $ depends on the value of both $ T_{\mathrm{Green}} $ and $ v^* $. 
	\end{remark}
}

	\begin{remark}
		Our result is consistent with the typical observations from the perspective of macroscopic traffic theory. Denote $\rho(x,t),v(x,t),q(x,t)$ as the traffic density, traffic flow velocity and traffic flow volume at position $x$ and time $t$, respectively. {The fundamental Lighthill-Whitham-Richard model~\cite{lighthill1955kinematic} is commonly employed to depict the relationship among them, which is shown in~\eqref{equ2:LWR} and~\eqref{equ2:LWR_flow}
			\begin{equation}
				\label{equ2:LWR}
				\frac{\partial\rho(x,t)}{{\partial}t} + \frac{{\partial}q(x,t)}{{\partial}x} = 0,
			\end{equation}
			
			\begin{equation}
				\label{equ2:LWR_flow}
				q(x,t) = Q(\rho(x,t)),
		\end{equation}}
		where $ Q $ is usually a convex and non-monotonic function---typical results reveal that the traffic flow volume $q$ usually grows up first and then drops down as the increase of the traffic density $\rho$. Similarly, our result suggest that the optimal target velocity for the mixed platoon is not "the higher, the better".
	\end{remark}

	\subsubsection{Constraints}
	\label{Sec:Constraints}
	For practical implementation of the obtained controller of the CAV, there also exist several constraints that need to be taken into consideration, including process constraints and terminal constraints. Regarding process constraints, first is the safety constraint, which means that each vehicle in the mixed platoon should keep a safe distance $d_{\mathrm{safe}}$ from the preceding vehicle. 
	\begin{equation}
		\label{equ2:Constraint_Distance}
		x_{i}(t) - x_{i-1}(t) - L_{\mathrm{veh}} \ge d_{\mathrm{safe}},\, \mathrm{for}\;t_0 \leq t \leq t_{\mathrm{f}},\,i=1,2,\ldots,n.
	\end{equation}
	
	Second is the practical constraint of the value of the velocity and acceleration of each vehicle in the mixed platoon. Denote $v_{\mathrm{max}}$ as the maximum velocity, $a_{\mathrm{min}}$ and $a_{\mathrm{max}}$ as the minimum and maximum acceleration, respectively. Then, it should be satisfied that
	\begin{align}
		0 \le v_{i}(t) \le v_{\max},\, &\mathrm{for}\;t_0 \leq t \leq t_{\mathrm{f}},\,i=0,1,2,\ldots,n ; \label{equ2:Constraint_Speed}\\
		a_{\min} \le  {a}_{i}(t) {\le}a_{\max},\, &\mathrm{for}\;t_0 \leq t \leq t_{\mathrm{f}},\,i=0,1,2,\ldots,n. \label{equ2:Constraint_Acceleration}
	\end{align}

	{For the terminal constraint, we mainly focus on the terminal position of the CAV. Recall that the deviation of the terminal position $ x_{0}(t_{\mathrm{f}}) $ of the CAV from the target position $x_{\mathrm{tar}}$ has been penalized in the terminal cost function~\eqref{equ2:ternimal_cost_fuction}. Here, we further add an inequality constraint to require the CAV to neither pass the stopping line nor leave a large spacing away from it, given as follows
		\begin{equation}
			\label{equ2:Constraint_Ternimal_Position_Relax}
			0 \le x_{0}(t_{\mathrm{f}}) \le  x_{0}^{\max}(t_{\mathrm{f}}),
		\end{equation}
		where $x_{0}^{\max}$ denotes the maximum tolerance spacing of the CAV from the stopping line. Note that most single CAV control algorithms (\eg,~\cite{asadi2011predictive}) would not incorporate the penalty for the terminal position $ x_{0}(t_{\mathrm{f}}) $ in the terminal cost. Instead, they typically consider a hard constraint to require $ x_{0}(t_{\mathrm{f}}) = 0 $. These algorithms are mostly suitable for fully-autonomous scenarios. In our work for mixed traffic intersections, by contrast, the control of the ``$1+n$'' mixed platoon aims at passing the intersection with an optimal velocity $ v^{*} $ and meanwhile reducing fuel consumption. If we predetermine a fixed setup for both terminal time $ t_\mathrm{f} $ and terminal position $ x_{0}(t_{\mathrm{f}}) $ similar to those single CAV control algorithms, the feasible region for the control input of the leading CAV could be greatly limited, which might jeopardize the optimal performance for the entire mixed platoon system.}
	
	\begin{remark}
		Note that the process constraint~\eqref{equ2:Constraint_Distance} only focuses on the longitudinal position inside a ``$1+ n $'' mixed platoon. The collisions between two adjacent ``$1+ n $'' mixed platoons are not considered in the constraints in the optimal control formulation. To address the safety constraint between different mixed platoons, we introduce an event-triggered algorithm, which is presented in Section~\ref{Section:Algorithm}.
	\end{remark}
	
	\subsubsection{Optimal Control Formulation}
	Lumping the aforementioned design of the cost function and the constraints, the overall optimal control problem can be formulated as follows
	\begin{equation}
		\newcommand{\ud}{\mathrm{d}}
		\label{equ:OptimalControl}
		\begin{aligned}
			&\mathop{\arg\max}_{u(t)}\; J = \varphi({X}(t_\mathrm{f})) + \int_{t_0}^{t_\mathrm{f}}L({X}(t),{u}(t))\ud t, \\
			&\mathrm{subject\; to: }\; \eqref{equ2:OVM_a}, \eqref{equ2:OVM_vopt}, \eqref{equ2:Constraint_Distance}, \eqref{equ2:Constraint_Speed}, \eqref{equ2:Constraint_Acceleration},\eqref{equ2:Constraint_Ternimal_Position_Relax}, \\
			&\mathrm{given: }\; {X}(t_\mathrm{0}).
		\end{aligned}
	\end{equation}
	
	\begin{table}[t]
		\centering
		\begin{tabular}[c]{cc}
			\toprule
			{\textbf{Parameter}} &  {\textbf{Value}} \\
			\midrule
			setup.nlp.solver &  SNOPT \\
			setup.nlp.snoptoptions.tolerance &  $ 2 \times $10$^{-3}$ \\
			setup.scales.method &  automatic-bounds  \\
			setup.derivatives.derivativelevel  & second  \\
			setup.mesh.tolerance &  $ 10^{-2}$ \\
			setup.mesh.iteration &  $ 8 $ \\
			setup.mesh.method &  hp1 \\
			setup.method &  RPMintegration \\
			\bottomrule
		\end{tabular}
		\caption{GPOPS Parameters}
		\label{tab3:OptimalControlSolve}
	\end{table}
	
	Before solving Problem~\eqref{equ:OptimalControl}, the optimal terminal velocity $ v^{*} $ needs to be calculated first by solving Problem~\eqref{equ2:T_Green:N}. In addition, the terminal time $ t_\mathrm{f} $ needs to be pre-determined, which is discussed in the proposed algorithm in Section~\ref{Section:Algorithm}. To solve Problem~\eqref{equ:OptimalControl} numerically, which is a high-order nonlinear optimal control problem, we transform it into nonlinear programming (NLP) problem by employing the pseudo-spectral method~\cite{elnagar1995pseudospectral}. Several practical packages can be directly utilized to address this problem; see, \eg, the GPOPS (General-Purpose Optimal Control Software) toolbox~\cite{patterson2014gpops} with parameter setups shown in Table~\ref{tab3:OptimalControlSolve}. {Note that the optimal control problem can be formulated by utilizing any car-following model that satisfies~\eqref{equ:CFmodel}. In this paper we focus on OVM~\eqref{equ2:OVM_a}~\eqref{equ2:OVM_vopt} considering its balance between model fidelity and computational tractability. Indeed, the adopted pseudo-spectral method has its limitation when more complex nonlinear car-following models (\eg, IDM) are used. It is a significant future direction to seek for more efficient numerical methods to solve this high-order nonlinear optimal control problem.}


	\subsection{Algorithm Design}
	\label{Section:Algorithm}
	In this section, we firstly introduce the benchmark algorithm for the control of CAVs at signalized intersections. Note that only the constraints in the mixed platoon are considered in Section~\ref{Sec:Constraints}. To avoid the potential collision between mixed platoons, we also present our event-triggered algorithm design for practical implementation of the ``$1+n$'' mixed platoon based method.
	
	\begin{figure*}[!t]
		\centering
		\includegraphics[width=\linewidth]{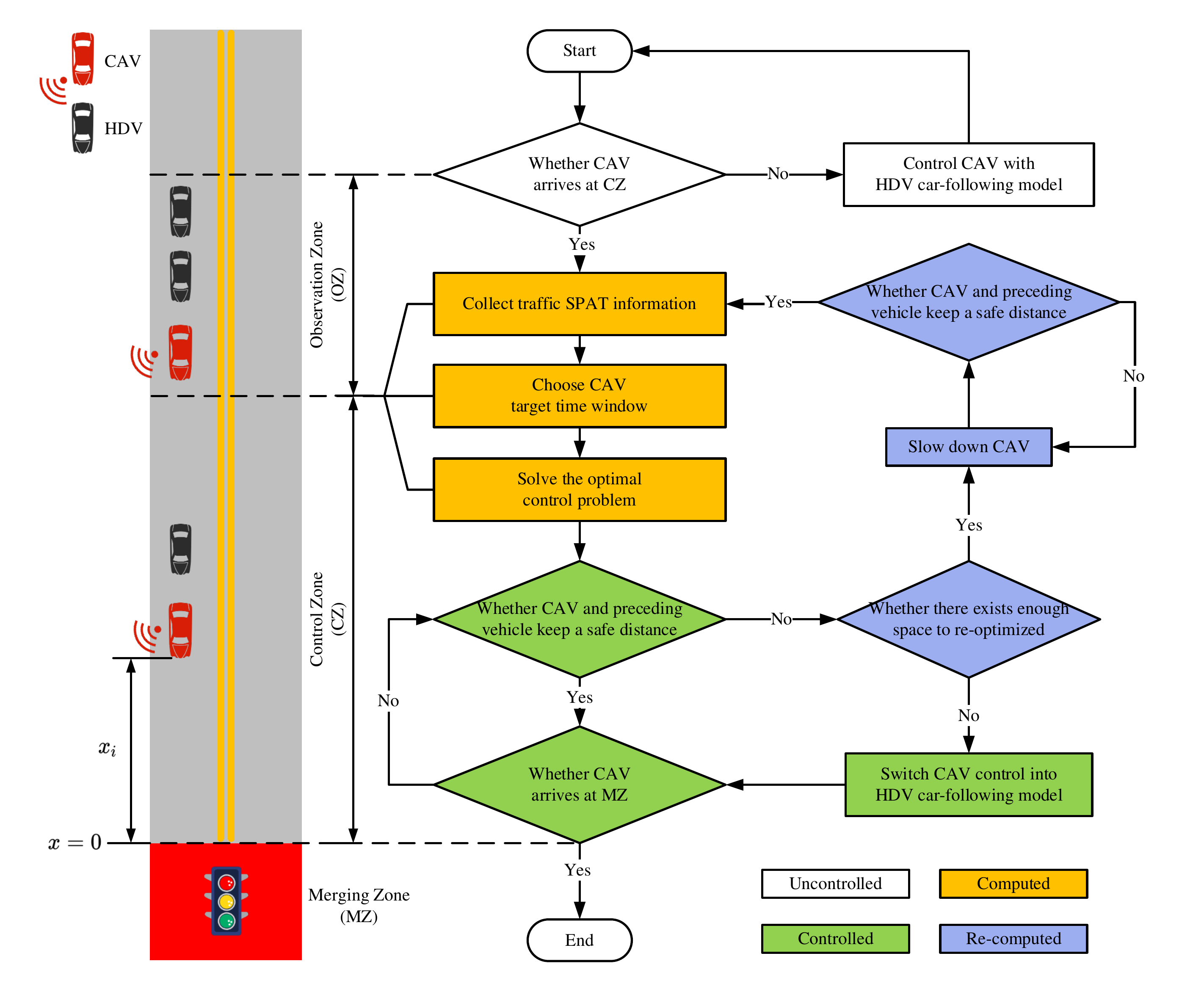}
		\caption{Mixed platoon algorithm. The left figure illustrates one arm of the intersection, which is separated into three zones corresponding to Section~\ref{section:ScenarioSetup}. {Note that the stopping line is set as the $ x=0 $ position and vehicle position $ x_i $ is calculated correspondingly.} The right figure illustrates the mixed platoon algorithm. CAV has four states. Before the CAV arrives at CZ, it is in \emph{uncontrolled} state. CAV becomes \emph{computed} state when it arrives at the boundary of OZ and CZ, where the optimization is performed. After entering CZ, CAV is in \emph{control} state and carries out the optimized velocity trajectory, unless CAV might collide with preceding vehicle thus it enters \emph{re-computed} state.}
		\label{fig:Algorithm}
	\end{figure*}
	
	\subsubsection{Benchmark Algorithm}
	\label{Sec:BenchmarkAlgorithm}
	The basic benchmark algorithm is the predictive cruise control (PCC) algorithm proposed by Asadi \etal~\cite{asadi2011predictive}. {After obtaining traffic SPAT by V2I technology in advance, a practical method was proposed to choose the target green phase window, shown as follows 
		\begin{equation}
			\label{equ3:PCC+}
			[v_{\mathrm{low}},v_{\mathrm{high}}] = \left[\frac{D_{k}}{r_{j} - t}, \frac{D_{k}}{g_{j} - t}\right] \cap\left[v_{\min}, v_{\max}\right],
		\end{equation}
		where $ D_{k} $ is the distance from the CAV $ k $ to the stopping line; $ t $ is the current time; $ r_{j} $ is the start time of the next $ j $th red phase; $ g_{j} $ is that of the next $ j $th green phase; $ v_{\min} $ and $ v_{\max} $ are the CAV velocity limitations. Recall that the green light time in Definition~\ref{Th:TerminalVelocity} can be obtained as $ T_{\mathrm{Green}} = r_{j} - g_{j} $ where $ T_{\mathrm{Green}} > 0 $. The nonempty intersection $ [v_{\mathrm{low}},v_{\mathrm{high}}] $ shown in~\eqref{equ3:PCC+} indicates the feasible velocity window which allows the CAV to pass without idling. Then, the CAV obtains the target velocity $ v_{\mathrm{target}} = v_{\mathrm{high}} $, calculates the corresponding terminal time $ t_{\mathrm{f}} = \frac{D_{k}}{v_{\mathrm{target}}}$, and schedules an optimum velocity trajectory through model predictive control (MPC)~\cite{maciejowski2002predictive}.
	}
	
	However, PCC cannot consider preceding HDVs (especially the queuing ones) into optimization. Instead of PCC, we consider an improvement inspired by Yang's queuing length adjustment method~\cite{yang2017eco}. The distance from the CAV $ k $ to the stopping line $ D_{k} $ is optimized by
	
	\begin{equation}
		\label{equ3:PCC+:distance}
		D_{k}^{*} = 
		\begin{aligned} 
			&\frac{v_k}{v_{k}+v_{\mathrm{AC}}}\left[D_{k}+v_{\mathrm{AC}}\left(r_{j}-t\right)\right], \\
		\end{aligned}
	\end{equation}
	where $ v_{k} $ is the current velocity of CAV $ k $, and $ v_{\mathrm{AC}} $ is the velocity of the traffic upstream flow shock wave. In the rest of this paper, this modified PCC algorithm is named as PCC+. 
	
	It can be inferred that even if the queuing length is considered in the optimization by~\eqref{equ3:PCC+:distance}, the inner core of the PCC+ algorithm is still designed for one CAV alone, which makes it a qualified benchmark algorithm to make comparisons with the mixed platoon based algorithm. Note that the similar target time window chosen method~\eqref{equ3:PCC+} is employed in our algorithm.
	
	
	
	\subsubsection{Algorithm Design for Mixed Platoon}
	\label{Section:Algorithm:MixedPlatoon}
	In this paper, we consider the traffic light as constant SPAT. We design a hierarchical mixed platoon based algorithm as shown in Fig.~\ref{fig:Algorithm}. In Section~\ref{Section:OptimalControlMode}, the velocity trajectory planning process has been explained in detail. However, there still remains one problem. In our expectation, the decision-making and trajectory planning processes are made when the CAV arrives at the border between OZ and CZ. But one-time planning cannot forecast the future trajectories of the preceding vehicles, which means there are still collision risks when the CAV is undertaking the planned velocity trajectory in CZ. To solve this problem, there are basically two solutions. One is called shooting heuristic~\cite{ma2017parsimonious}, which estimates the maximum possible trajectory boundaries of the preceding vehicle. The drawback is that the models' deviation makes it difficult to guarantee prediction accuracy. The other method is receding horizon~\cite{feng2015real}, which allows the CAV to make planning in a constant time period. However, this method brings a huge computation burden for real-time implementation. 
	
	
	In fact, trajectory interference seldom happens when the traffic flow density is not saturated. So distributing extra computation brings very limited benefit. Thus, we design an event-triggered method to guarantee safety with minimum calculation. The CAV state is separated into four states, \emph{uncontrolled}, \emph{computed}, \emph{controlled} and \emph{re-computed}.
	
	
	\begin{enumerate}[(1)]
		\item \emph{uncontrolled}
	\end{enumerate}
	
	{
		As shown in the white boxes in Fig.~\ref{fig:Algorithm}, all the CAVs are in \emph{uncontrolled} state when driving in OZ, whose length is denoted as $ L_{\mathrm{obs}} $. During this period, all the CAVs will directly follow the HDVs' car-following model~\eqref{equ2:OVM_a} to control themselves. Meanwhile, all the vehicles, including HDVs and CAVs, are also allowed to change lanes freely in OZ, as highlighted in the assumptions in Section~\ref{section:ScenarioSetup}.
	}
	\begin{enumerate}[(2)]
		\item \emph{computed}
	\end{enumerate}
	
	
	When the CAV arrives at CZ, its state becomes \emph{computed}, as shown in the orange boxes in Fig.~\ref{fig:Algorithm}. At this time point, the CAV firstly receives the traffic SPAT and chooses its target green phase window. Based on the time window, the optimal velocity trajectory is calculated by optimal control framework as shown in Section~\ref{Section:OptimalControlMode}. 
	
	
	As mentioned before, the target window is also chosen by the method in~\eqref{equ3:PCC+}. Differently, only one CAV is considered in~\cite{asadi2011predictive}, and thus it's straightforward to set the target window as the closest one. In our research, however, there is an optimal terminal platoon velocity as designed in Section~\ref{Sec:Terminal Speed}. If the target velocity is set as the maximum velocity $ v_{\max} $, the optimization on the velocity trajectory becomes meaningless. {Therefore, the limitation on the $ v_{\max} $ \eqref{equ3:PCC+} is slacked to
		\begin{equation}
			\label{equ3:Asadi:2}
			[v_{\mathrm{low}},v_{\mathrm{high}}] = \left[\frac{D_{k}}{r_{j} - t}, \frac{D_{k}}{g_{j} - t}\right] \bigcap\left[v_{\min }, \frac{v_{\max } + v^{*}}{2}\right],
		\end{equation}
		in which $ v^{*} $ means the optimal velocity calculated from~\eqref{equ2:T_Green:N}. Similar to the benchmark algorithm presented in Section~\ref{Sec:BenchmarkAlgorithm}, the target velocity is selected through the nonempty intersection $ [v_{\mathrm{low}},v_{\mathrm{high}}] $ with $ v_{\mathrm{target}} = v_{\mathrm{high}} $. The corresponding terminal time is calculated as $ t_{\mathrm{f}} = \frac{D_{k}}{v_{\mathrm{target}}}$, which is utilized as terminal time in the cost function~\eqref{equ2:cost_fuction} in the optimal control formulation.
	}
	
{
		\begin{remark}
			Recall that a maximum value of the passing number $ n $ of vehicles is obtained when solving~\eqref{equ2:T_Green:N}. In practical traffic flow, however, the number of the following HDVs behind the leading CAV might not be exactly equal to $ n $. In particular, when MPR is high, there might be fewer HDVs than $ n $ behind the CAV, and there might be other CAVs except for the leading CAV in the ``$1+n$'' mixed platoon. In this case, our algorithm lets the other CAVs in the ``$1+n$'' mixed platoon to utilize the numerical expression of the HDV's car-following model (precisely, the OVM model~\eqref{equ2:OVM_a}\eqref{equ2:OVM_vopt}) to determine their control input. In this way, the optimal control formulation~\eqref{equ:OptimalControl} is preserved, which allows the design of the control input of the leading CAV. It would be an interesting future direction to apply cooperative control strategies to multiple CAVs in a mixed platoon when passing an intersection, which might further improve the intersection performance.
		\end{remark}
	}
	
	\begin{enumerate}[(3)]
		\item \emph{controlled}
	\end{enumerate}
	
	When the CAV is in CZ, it is in \emph{controlled} state, as the green boxes showed in Fig.~\ref{fig:Algorithm}. In this state, CAV executes the velocity trajectory planned in \emph{computed} state. If there is a preceding vehicle, the CAV simply checks the safety distance constraint~\eqref{equ2:Constraint_Distance} with the preceding vehicle in each step. If not, the CAV's state is changed into \emph{re-computed}.
	
	\begin{enumerate}[(4)]
		\item \emph{re-computed}
	\end{enumerate}
	
	{
		In \emph{controlled} state, if the leading CAV is too close to the preceding vehicle (the threshold distance is set as $6\, \mathrm{m}$ in the simulation), it will become \emph{re-computed} state, which aims to slow down the CAV to avoid collision with the preceding vehicle. Specifically, if the distance to the stopping line is far enough for the CAV to do another optimization ($ D_i > k_{\mathrm{c}}L_{\mathrm{ctrl}} $, where $ L_{\mathrm{ctrl}} $ is the length of CZ and $ k_{\mathrm{c}} \in [0,1] $), it will re-compute its new optimal  trajectory by following the same process in the \emph{computed} state. On the other hand, if the CAV is too close to the intersection ($ D_i \leq k_{\mathrm{c}}L_{\mathrm{ctrl}} $), there is little optimization space for the CAV, and thus in this case, the CAV will directly follow the HDVs' car-following model~\eqref{equ2:OVM_a} to pass the intersection in the remaining time. 
		
	}
	
	\begin{table}[!t]
		\centering
		\begin{tabular}[c]{ccc}
			\toprule
			{\textbf{Parameters}} & {\textbf{Symbol}} & {\textbf{Value}} \\
			\midrule
			Simulation step (s) & $ T_{\mathrm{s}} $ & 0.1 \\
			Maximum acceleration (m/s$^{2} $) & $ a_{\max} $ & 3 \\
			Minimum acceleration (m/s$^{2} $) & $ a_{\min} $ & -6 \\
			Terminal position weight & $ \omega_1 $ & $ 10^5 $ \\
			Terminal velocity weight & $ \omega_2 $ & $ 10^4 $ \\
			Maximum velocity (m/s) & $ v_{\max} $ & 15 \\
			Minimum velocity (m/s)& $ v_{\min} $ & 0  \\
			Control zone length (m) & $ L_{\mathrm{ctrl}} $ & 300 \\
			Observation zone length (m) & $ L_{\mathrm{obs}} $ & 500\\
			\bottomrule
		\end{tabular}
		\caption{Parameter setup for the traffic simulation}
		\label{tab3:OptimalControlMode}
	\end{table}
	
	\section{Simulation Results and Discussion}
	\label{Section:SimulationResult}
	In this section, we evaluate the effectiveness of the proposed optimal control framework in Section~\ref{Section:OptimalControlMode} and the corresponding algorithm in Section~\ref{Section:Algorithm} based on large-scale traffic simulations. The nonlinear OVM model~\eqref{equ2:OVM_a} is utilized to model the dynamics of HDVs. Firstly, the simulation environment and evaluation index are introduced. Then, a case study under $50\%$ MPR is presented to analyze the algorithm performance. Finally, two simulation experiments are conducted, considering multiple traffic volumes and MPRs.
	
	
	\subsection{Simulation Environment and Evaluation Index}
	
	The traffic simulation is conducted in SUMO, which is widely used in traffic researches~\cite{lopez2018microscopic}. The simulation is run on Intel Core i7-7700 processor @3.6GHz. Some simulation parameters are shown in Table~\ref{tab3:OptimalControlMode}.
	
	
	%
	
	Denote $ t_{i}^{\mathrm{in}} $ as the time step when vehicle $ i $ enters CZ, and $ t_{i}^{\mathrm{out}} $ as the time step when it exits CZ. In this way, it means that the vehicle spends $ t_{i}^{\mathrm{out}} - t_{i}^{\mathrm{in}} $ time to travel through CZ, while it spends $ {L_{\mathrm{ctrl}}}/{v_{\max}} $ time to travel through CZ in free drive condition. Accordingly, Average Travel Time Delay (ATTD) is chosen to measure the average traffic efficiency for the mixed platoon, as shown in~\eqref{equ3:TravelTimeDelay}. 
	\begin{equation}
		\label{equ3:TravelTimeDelay}
		\mathrm{ATTD} = \frac{1}{n} \sum_{i = 0}^{n} \left(t_{i}^{\mathrm{out}} - t_{i}^{\mathrm{in}}-\frac{L_{\mathrm{ctrl}}}{v_{\max}}\right).
	\end{equation}
	Meanwhile, fuel consumption per 100km is chosen as the evaluation index to measure the fuel economy. {We consider $ 100 $ vehicles in each experiment. Note that at the initial time of the simulation, there are no vehicles in the traffic network. Since our traffic optimization focuses on steady-state traffic flow, we exclude the performance of the firstly-entered part of the vehicles and focus on that of the last $ 50 $ vehicles in the performance evaluation for the entire intersection.}
	

	\subsection{A Case Study under 50\% MPR}
	
	\begin{figure*}[!t]
		\centering
		\subfigure[No control input assigned to CAVs\label{fig:NC_700}] {\includegraphics[width=0.95\linewidth] {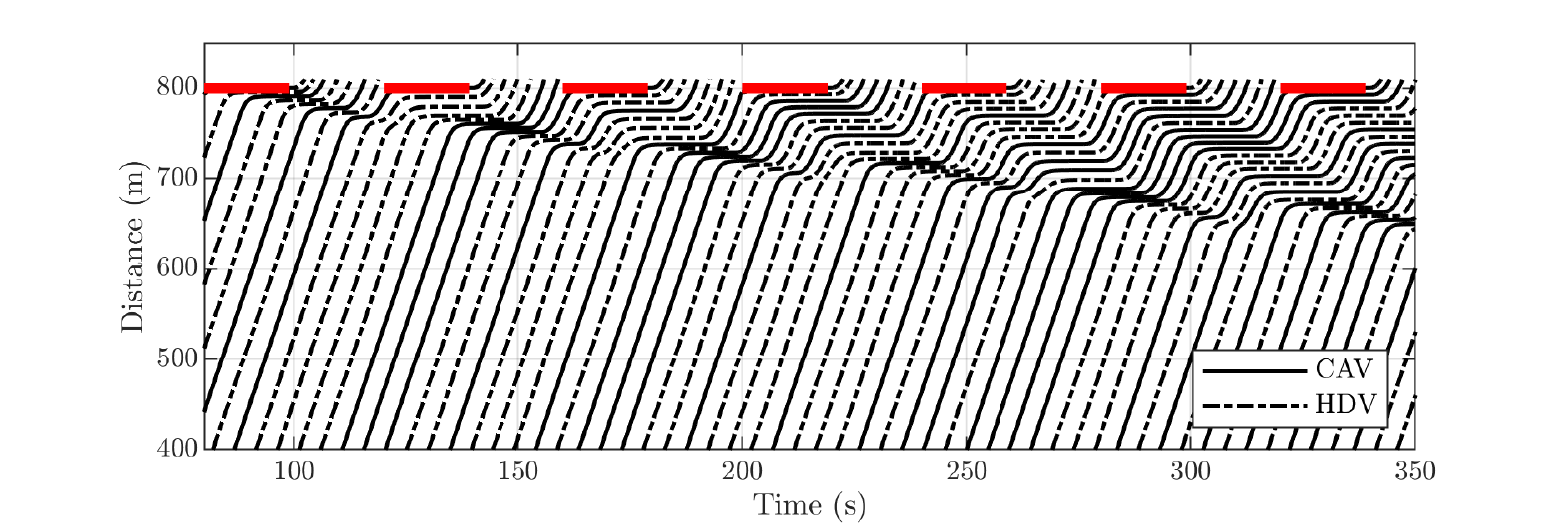}}
		
		\subfigure[PCC+ control input assigned to CAVs\label{fig:OC_700}] {\includegraphics[width=0.95\linewidth] {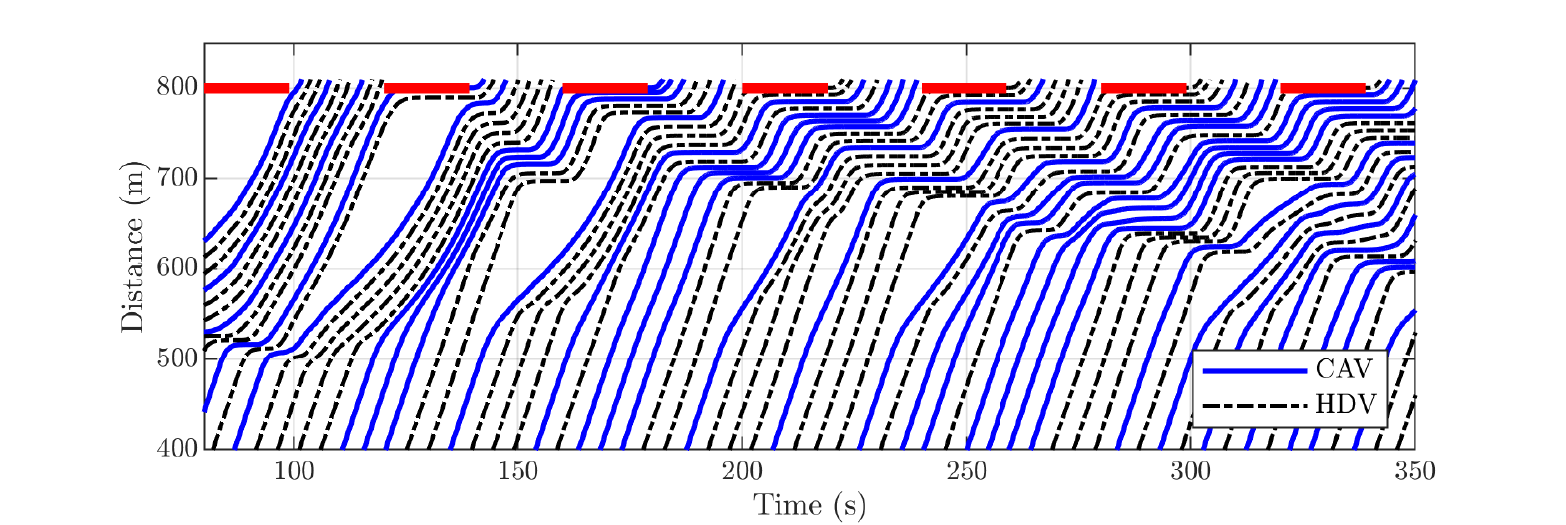}}
		
		\subfigure[Mixed platoon control input assigned to CAVs\label{fig:MC_700}] {\includegraphics[width=0.95\linewidth] {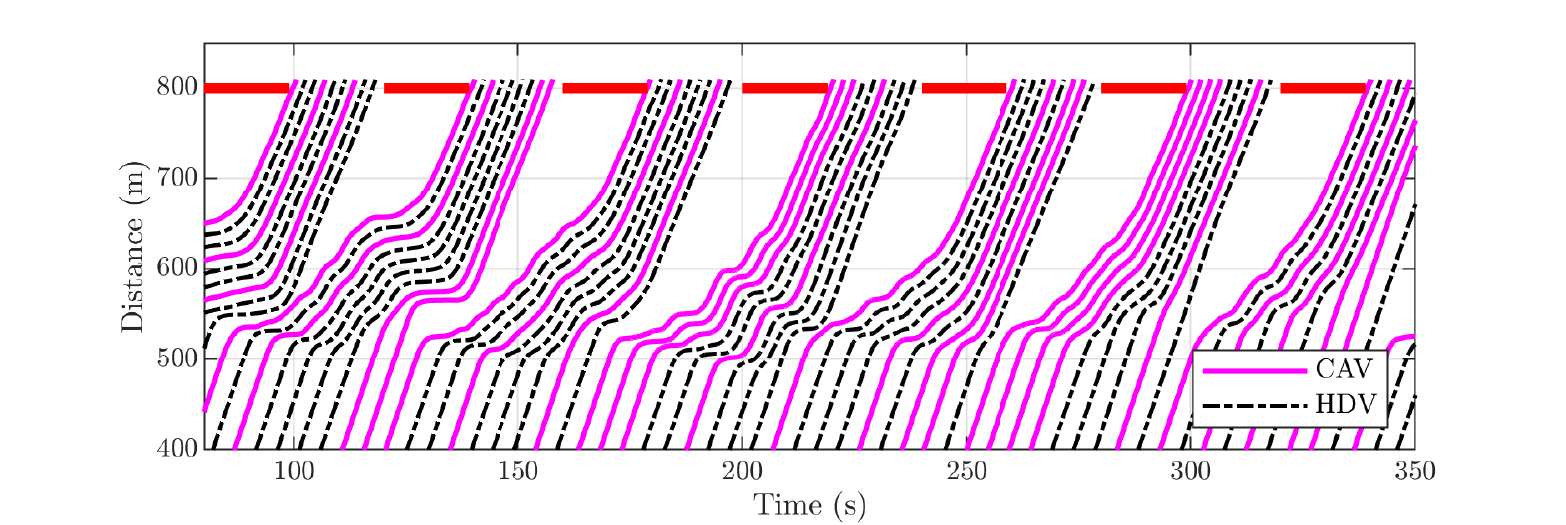}}
		\caption{{Vehicle trajectories of no control applied, PCC+ control and mixed platoon control in $ 750 \, \mathrm{veh/(hour} \cdot \mathrm{lane}) $ (MPR = 50\%). The solid lines represent the CAVs' trajectories, while the dashed dot lines represent HDVs' trajectories. Black solid lines mean there are no optimized control input assigned to CAV. Blue lines are for the PCC+ controlled CAVs while magenta lines are for the proposed mixed platoon control.} \label{fig:ControlCompare}}
	\end{figure*}
	
	We first conduct the simulation in $ 50\% $ MPR of $ 100 $ vehicles, which means there are randomly distributed $ 50 $ CAVs and $ 50 $ HDVs driving into the intersection. Then further comparisons are made between the proposed Mixed Platoon (MP) algorithm and the bench-marking algorithm PCC+ shown in Section~\ref{Section:Algorithm} at different traffic volumes.

	
	%
	
	In Fig.~\ref{fig:ControlCompare} there are three groups of vehicle trajectories in $ 750 \, \mathrm{veh/(hour} \cdot \mathrm{lane}) $, which represent no control input (colored in black), PCC+ control (colored in blue) and proposed mixed platoon control (colored in magenta), respectively. Note that vehicle distributions in three scenarios are exactly the same. It can be observed that in Fig.~\ref{fig:NC_700} all the vehicles are queuing because of the red light, which causes vehicles' idling and fuel waste. In Fig.~\ref{fig:OC_700} the queuing is postponed but still gradually accumulating. On the contrary, with MP applied, the queuing accumulation is constrained in CZ range (300 m). It means that the influence of queuing is limited in one intersection rather than spreading to the upstream intersection, which is important for urban traffic control. In PCC+, the CAV is able to obtain a proper  trajectory for itself. However, the HDVs around it stop the CAV to successfully undertake the optimized trajectory and the queuing for following HDVs behind the CAV is inevitable. What's worse, this might continue to jeopardize the optimization of the next CAV in upstream traffic flow. With HDV considered in the optimization and proper platoon  chosen, the traffic mobility can be further improved as discussed in Section~\ref{Section:OptimalControlMode}. As shown in Fig.~\ref{fig:MC_700}, CAVs are able to lead the following HDVs to pass the intersection with proper velocity in MP.
	
	Note that in MP algorithm, vehicles' idling behavior is avoided as much as possible, and therefore the fuel economy is significantly improved. The specific performance indexes are shown in Table~\ref{Table:Figure5Result}. Moreover, compared with PCC+ control in Fig.~\ref{fig:OC_700}, MP control in Fig.~\ref{fig:MC_700} creates much bigger gaps between trajectory blocks, which creates more optimization space for CAV. Generally speaking, the existence of these gaps means that the CAV algorithm brings lower traffic saturation and higher traffic mobility at the intersections~\cite{feng2018spatiotemporal}. 
	
	{
	To further clarify the performance of MP algorithm in reducing fuel consumption, we present the trajectory of one single vehicle in Fig.~\ref{fig:ControlCompare:No}, indexed as no.~$73$ in Fig.~\ref{fig:ControlCompare}. Note that in No control of Fig.~\ref{fig:NC} or PCC+ control of Fig.~\ref{fig:OC}, the velocity remains $ 0 $ for a long time, which causes a huge amount of fuel consumption when the vehicle stops and the engine idles. The long-time acceleration and deceleration also make the engine less likely to work in high-efficiency working conditions, which worsens the fuel consumption performance. In MP control, on the contrary, the vehicle is always moving forward although the accel/decel cycles are increased moderately (but the oscillation amplitude has been significantly reduced) to arrive at the stopping line with a proper velocity at the proper time. The trajectory is much smoother and the idling behavior is avoided as much as possible. Consequently, the accumulative fuel consumption of this single vehicle under No control or PCC+ reaches $ 0.20\,\mathrm{L} $, while the MP control is only $ 0.12\,\mathrm{L} $.
	}
	
	\begin{table}[t]
		\centering
		\begin{tabular}[c]{cccc}
			\toprule
			& {\textbf{No Control}} &  \textbf{PCC+} & {\textbf{Mixed Platoon}} \\
			\midrule
			{\textbf{ATTD} (s)} & $126.56$ &  $125.26$ & $105.81$ \\
			{\textbf{Fuel Consumption} ($ \mathrm{L}/100\mathrm{km} $)} &  $16.77$ &  $16.66$ & $10.53$ \\
			{\textbf{Accel/Decel Cycles per Vehicle}} & $2.58$ &  $8.64$ & $5.20$ \\
			{\textbf{Ideling Time} (s)} & $40.29$ &  $40.13$ & $2.32$ \\
			\bottomrule
		\end{tabular}
		\caption{{Traffic efficiency and fuel consumption performance comparison of the three algorithm in Fig.~\ref{fig:ControlCompare}.}
		\label{Table:Figure5Result}}
	\end{table}
	
	\begin{figure*}[t]
		\centering		
		\subfigure[No control\label{fig:NC}] {\includegraphics[width=0.32\linewidth] {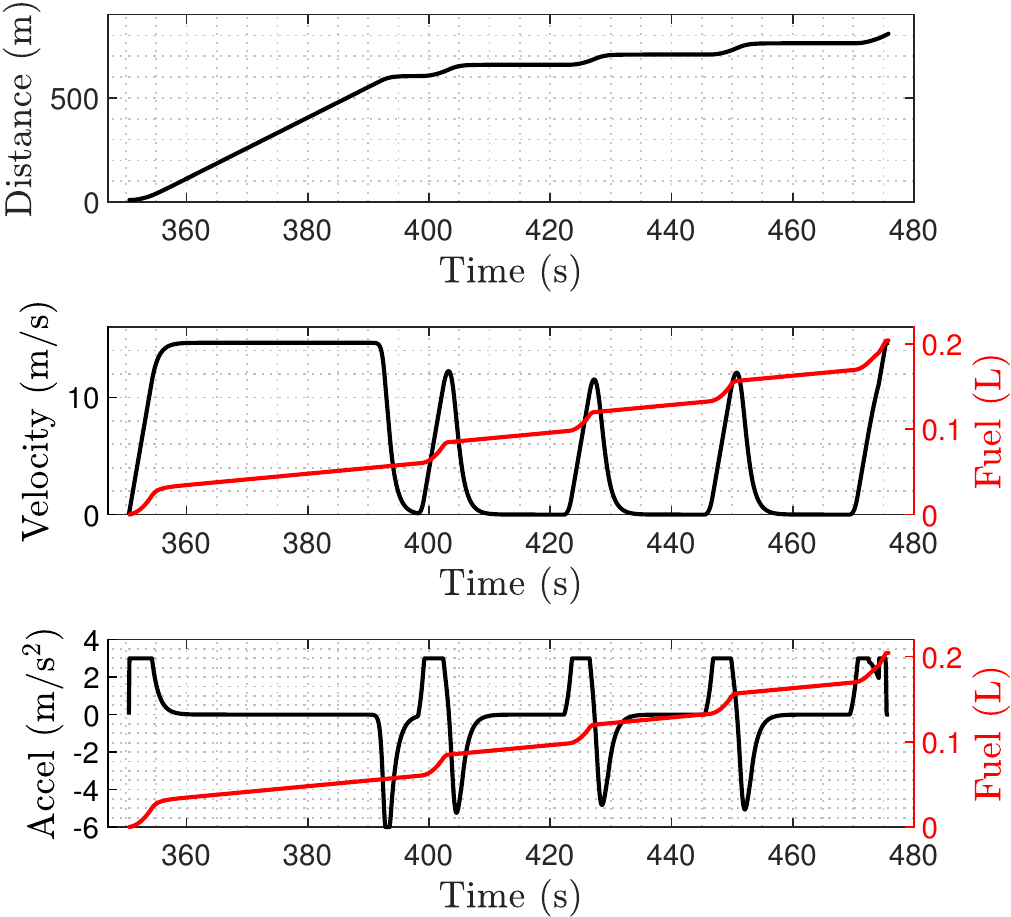}}
		\subfigure[PCC+ control\label{fig:OC}] {\includegraphics[width=0.32\linewidth] {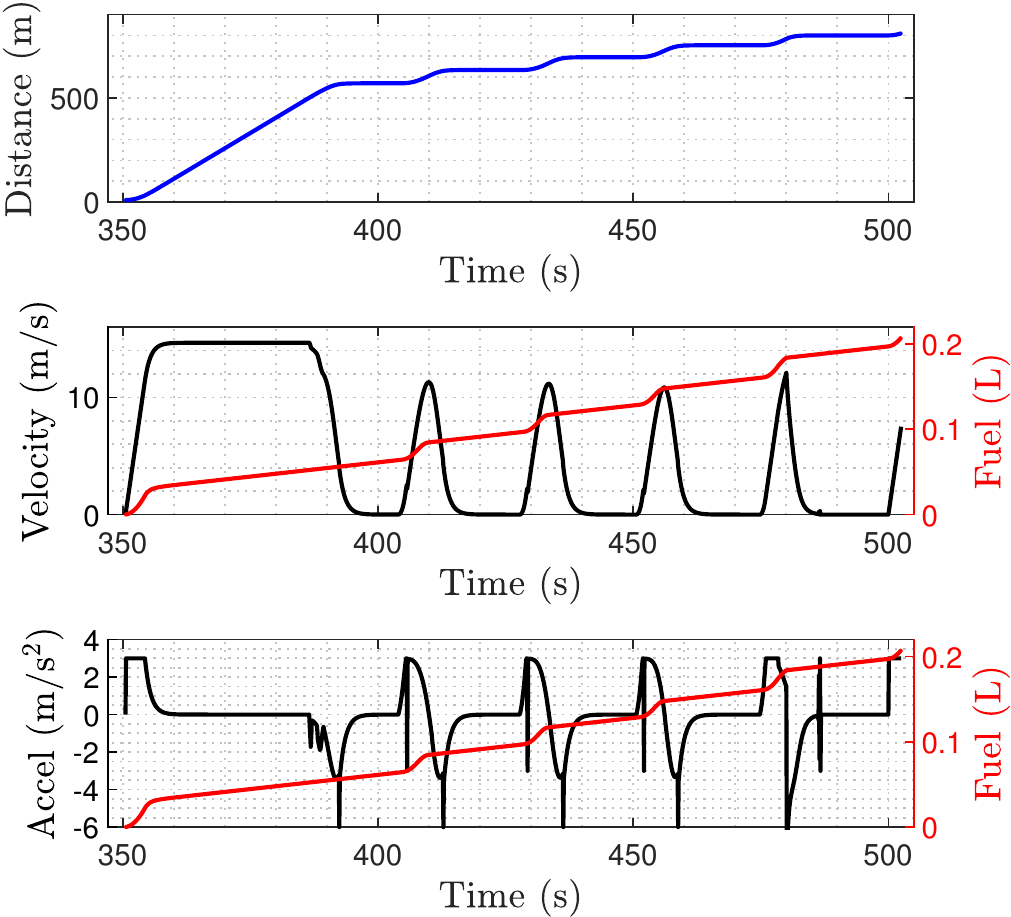}}
		\subfigure[Mixed platoon control\label{fig:MC}] {\includegraphics[width=0.32\linewidth] {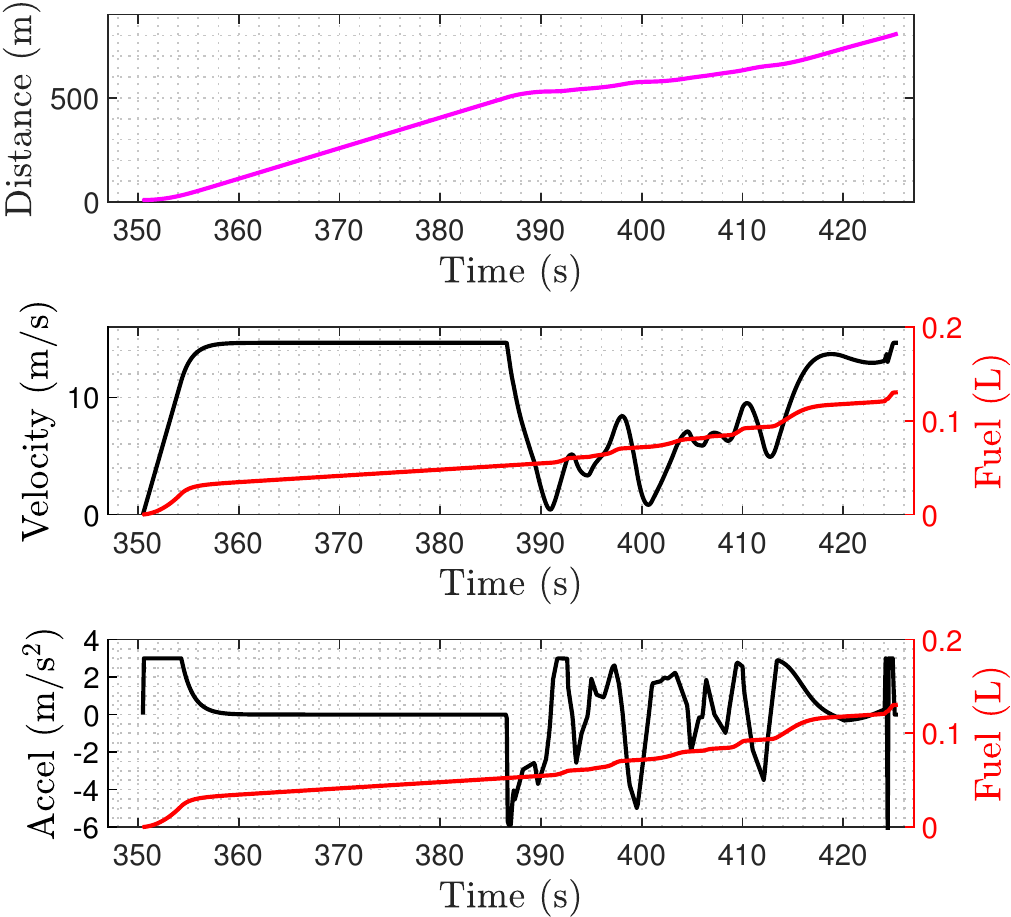}}
		\caption{Trajectories of vehicle no.~$73$ under the three algorithms. Vehicle idling is avoided as much as possible in the MP algorithm compared to the benchmark algorithms. The smoothed velocity trajectory of the MP algorithm helps to improve fuel economy.\label{fig:ControlCompare:No}}
	\end{figure*}

	
	
	
	\subsection{Simulation Results at Different Traffic Volumes}
	\begin{figure*}[t]
		\centering
		\subfigure[Average Travel Time Delay Comparison] {\includegraphics[width=0.4\linewidth]{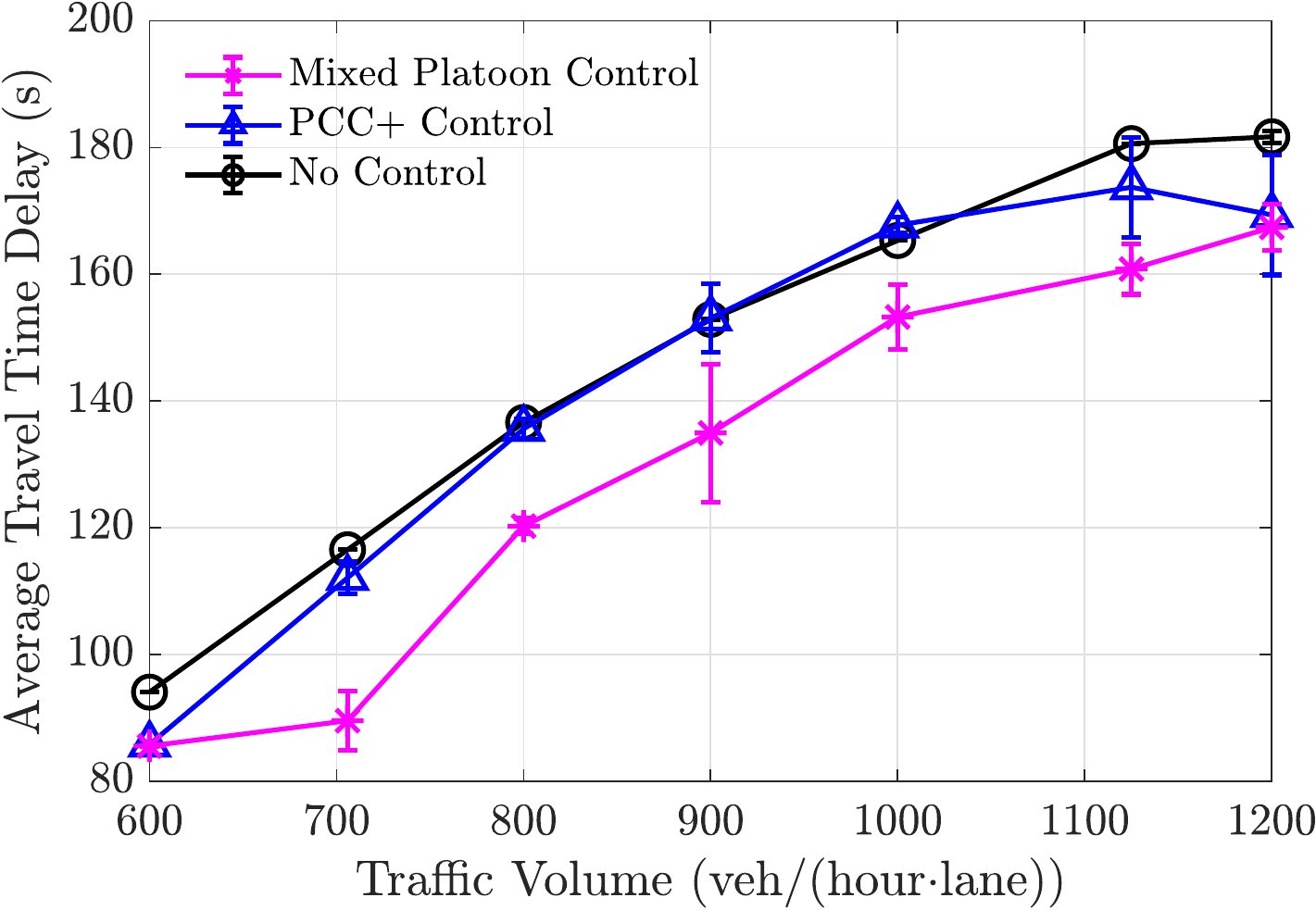}\label{}}
		\hfil
		\subfigure[Fuel Consumption Comparison] {\includegraphics[width=0.4\linewidth]{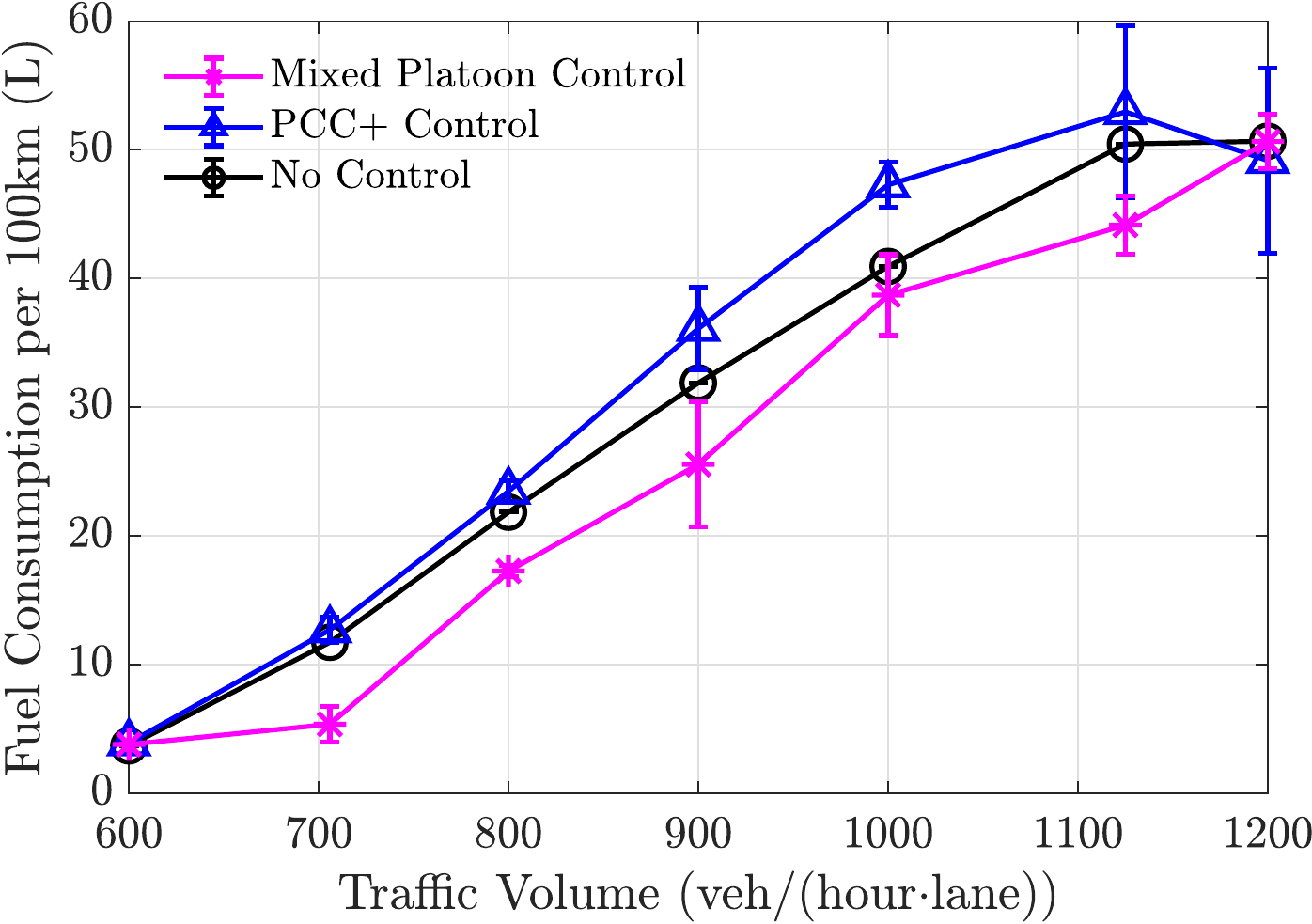}\label{}}
		\caption{{Performance comparison between the three algorithms in different traffic volumes (MPR = 50\%). Standard deviations of the data are also provided as error bars. The black lines means no optimized control input assigned to CAV. Blue lines and magenta lines are for PCC+ and MP algorithm.} \label{fig:ControlCompareSum}}
	\end{figure*}
	Fig.~\ref{fig:ControlCompareSum} compares the performance of the algorithms in various traffic volumes, which is also conducted in $ 50\% $ MPR of $ 100 $ vehicles. {Each working condition is simulated five times and standard deviations are plotted in the error bars as well. Generally speaking, MP has obvious improvement on both traffic efficiency and fuel consumption between $ 600 \mathrm{-} 1200 \, \mathrm{veh/(hour} \cdot \mathrm{lane}) $. When the traffic volume is lower than $ 600 \, \mathrm{veh/(hour} \cdot \mathrm{lane})$, there is no queuing in the intersection no matter which control algorithm is applied to CAV. By contrast, in a much supersaturated traffic volume (\eg, over $ 1200 \, \mathrm{veh/(hour} \cdot \mathrm{lane})$), the optimization space for CAV is rather low considering the high traffic density.
		
	It is clearly observed that PCC+ shows limited improvement in traffic efficiency in the mixed traffic environment. Since HDVs are not considered in the optimization process in PCC+, more velocity fluctuations might happen, which makes the fuel consumption even worse compared with the case of no control applied to CAV.
	The reason why MP outperforms PCC+ is that when the traffic volume increases, the interference between HDVs and CAVs becomes much more frequent. When those HDVs are incorporated in CAV optimization like MP, the CAV can help improve the traffic efficiency and fuel consumption to a higher degree. }
	
	\subsection{Simulation Results at Different Traffic Volumes and MPRs}
	\label{sec:FinalSimulation}
	
	

	In the last simulation, we verify MP algorithm performance in multiple MPRs and traffic volumes. In addition, a stochastic variant for the OVM model~\eqref{equ2:OVM_a} is also under consideration, which is given by
		\begin{equation}
			\dot{v}_{i}=\gamma \kappa[V_{\mathrm{des}}({d}_{i})-v_i],
		\end{equation}
		where $ \gamma \in \left[0.9,1.1\right] $ denotes a stochastic sensitivity gain.

\begin{figure*}[t]
	\centering
	\subfigure[ATTD Improvement\label{fig:ATTD:3D}] {\includegraphics[width=0.45\linewidth]{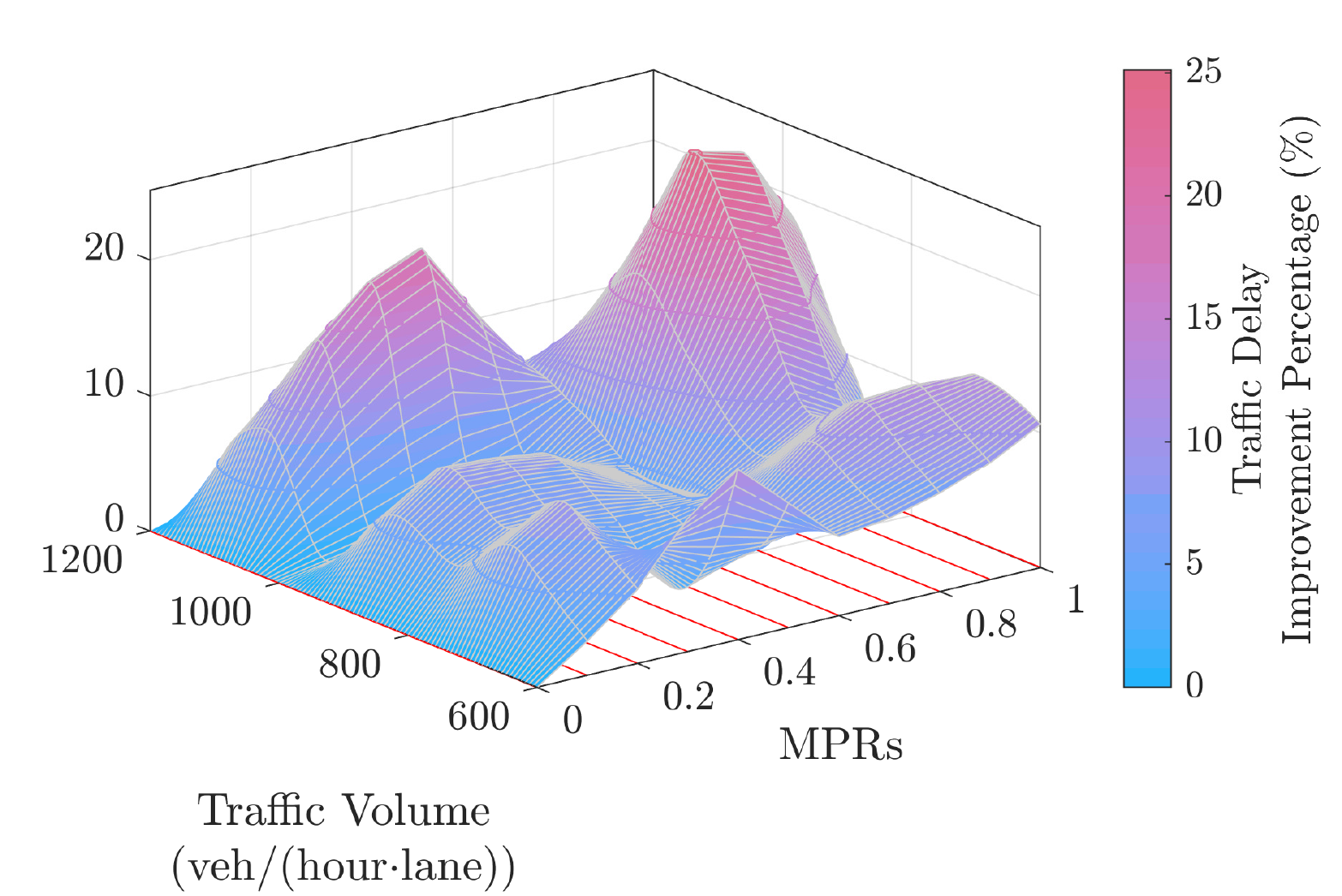}}
	\hfil
	\subfigure[Fuel Consumption Improvement\label{fig:FC:3D}] {\includegraphics[width=0.45\linewidth]{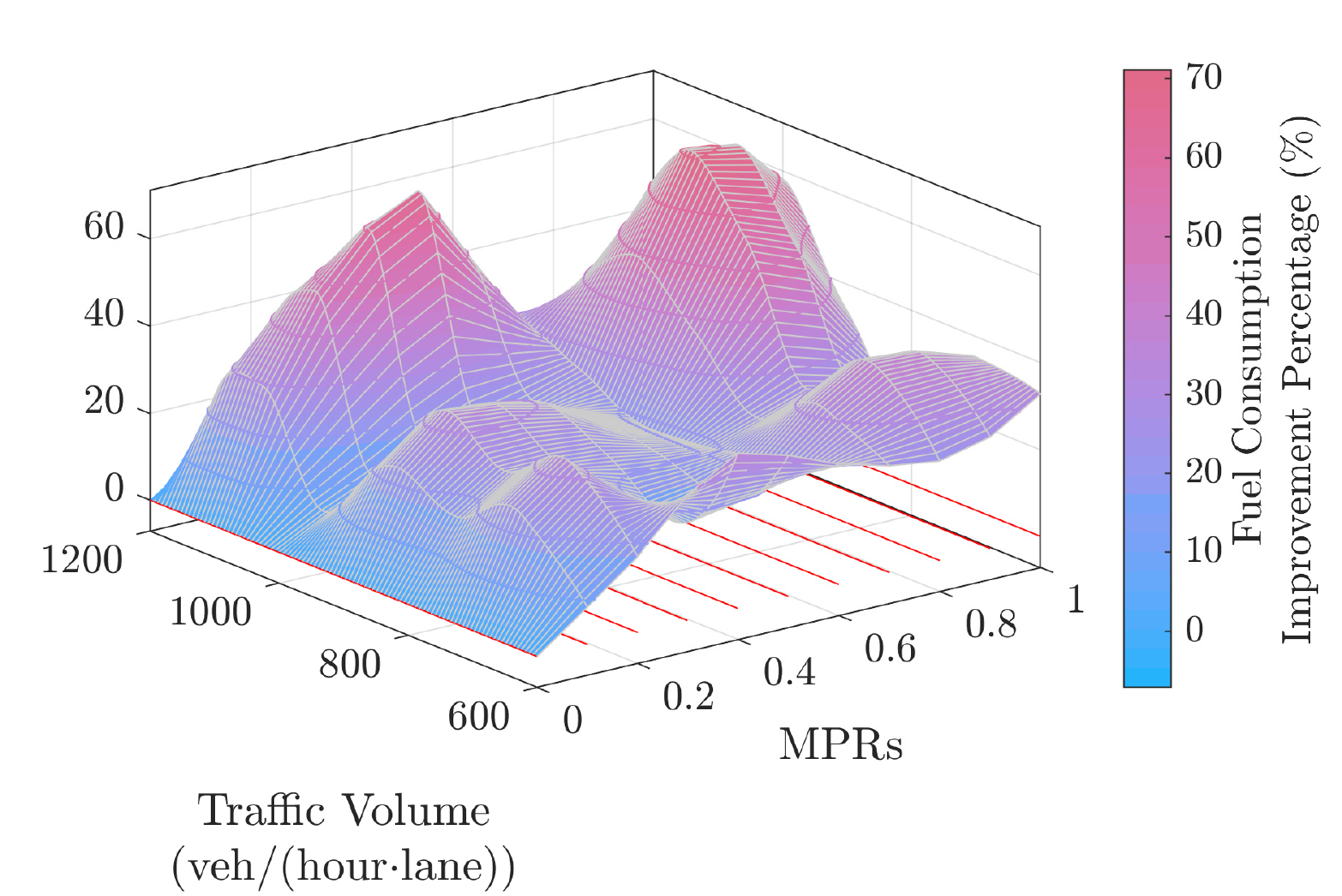}}
	
	\subfigure[ATTD Improvement in 2D\label{fig:ATTD:2D}] {\includegraphics[width=0.45\linewidth]{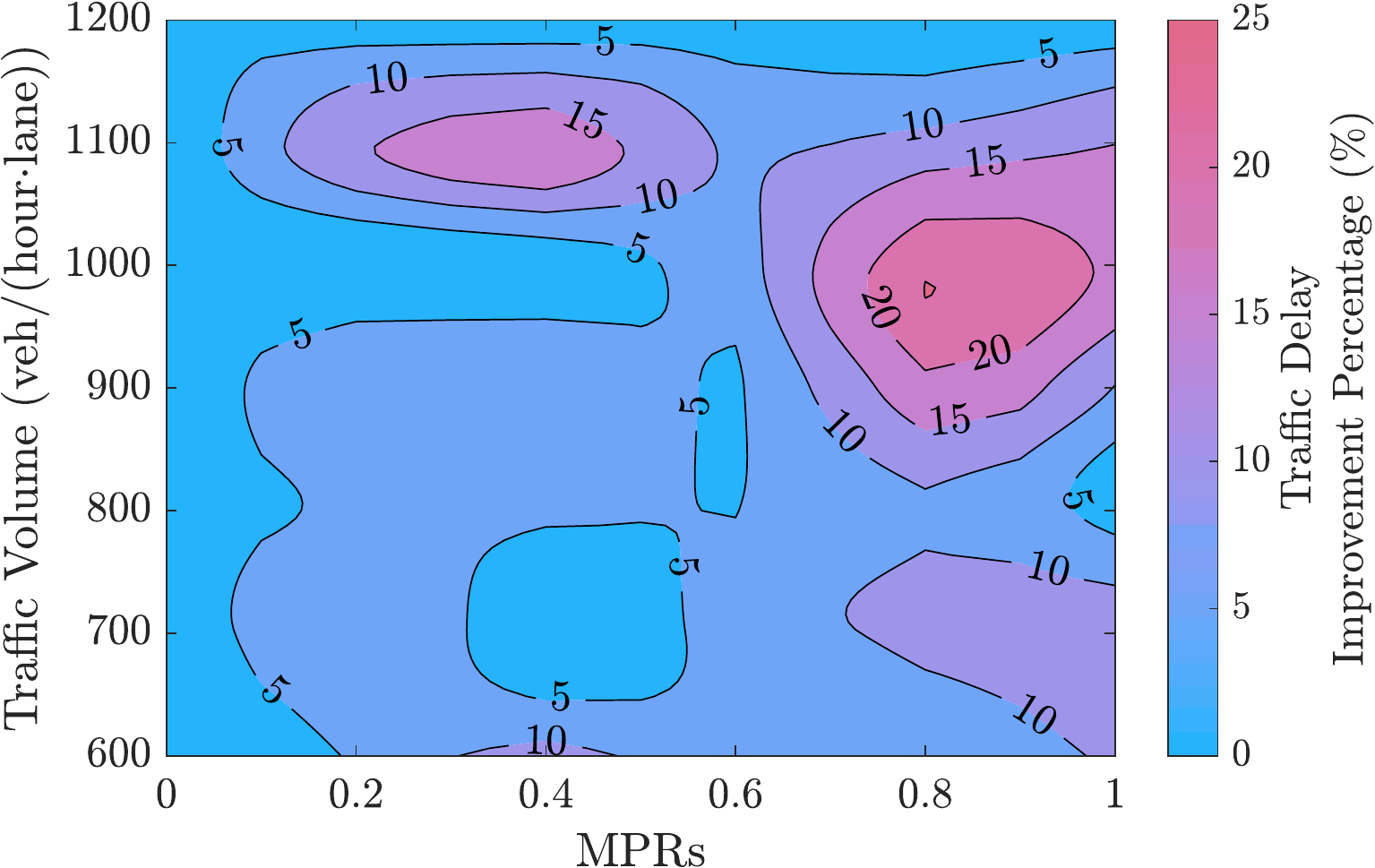}}
	\hfil
	\subfigure[Fuel Consumption Improvement in 2D\label{fig:FC:2D}] {\includegraphics[width=0.45\linewidth]{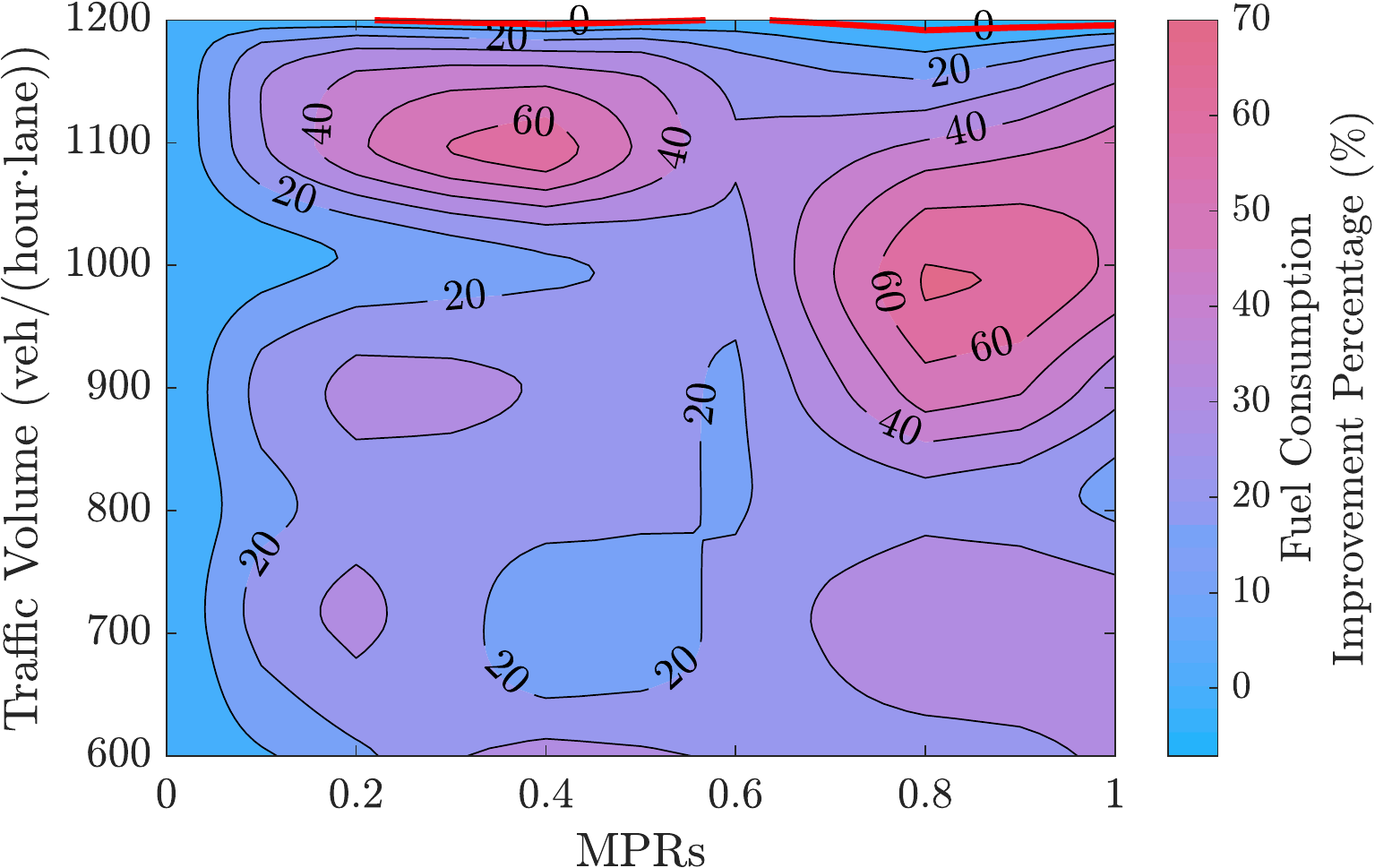}}
	\caption{Algorithm performance comparison in different traffic volumes and different MPRs. $ 0\% $ MPR is chosen as the reference value to evaluate the MP algorithm traffic benefits, which is shown as the red lines in Fig.~\ref{fig:ATTD:3D} $\sim$ Fig.~\ref{fig:FC:2D}.}
\end{figure*}

	%
	
	The specific results of ATTD and fuel consumption under MP are shown in Table~\ref{tab:Result:ATTD} and Table~\ref{tab:Result:FC} in~\ref{sec:Appendix_Simulation}. As can be observed, ATTD and fuel consumption get worse as traffic volume increase while the penetration of CAV helps improve the traffic mobility and fuel consumption. To present the result in a more straightforward way, the $ 0\% $ MPR is chosen as the reference value to evaluate the MP algorithm improvement, \emph{i.e.}, how much improvement MP algorithm brings compared to the case where all the vehicles are HDVs. As shown in Fig.\ref{fig:ATTD:3D} and~\ref{fig:FC:3D}, the red lines in the figure denote the reference plane of $ 0\% $ MPR and the surface means the MP algorithm improvements on the ATTD or fuel consumption. Similarly, the figures are re-plotted as 2D contours as shown in Fig.\ref{fig:ATTD:2D} and~\ref{fig:FC:2D}. It is shown that in general, the MP algorithm can improve the traffic efficiency and fuel consumption at the same time in $600\mathrm{-}1200 \, \mathrm{veh/(hour} \cdot \mathrm{lane})$. The highest improvement occurs at around $ 35{-}40\% $ MPR in $1100\, \mathrm{veh/(hour} \cdot \mathrm{lane})$ and $ 80{-}85\% $ MPR in $1000\, \mathrm{veh/(hour} \cdot \mathrm{lane})$, with a $ 20\% $ improvement in ATTD and a $ 60\% $ improvement in fuel consumption. 
	
	We then make further investigations on the influence of traffic volumes. It is observed that the highest improvement of MP algorithm occurs at $1000\mathrm{-}1100$ $\mathrm{veh/(hour} \cdot \mathrm{lane})$, which meets our expectation on the algorithm. Given the maximum velocity $ v_{\max} $, it takes the first CAV $  L_{\mathrm{obs}}/v_{\max}  $ time to arrive at CZ. At that time, the CAV is firstly optimized and the optimal following vehicle number $ n$ is obtained by~\eqref{equ2:T_Green:N}. Accordingly, in the best case, the number of the following vehicles perfectly matches our design, and the corresponding traffic density is $  L_{\mathrm{obs}}/(v_{\max} \cdot n)  $. According to Table~\ref{tab3:OptimalControlMode} and Fig.~\ref{fig:DistanceNumber}, it can be obtained that the best traffic density is $ 3.3 \, \mathrm{s/veh} = 1080 \, \mathrm{veh/(hour} \cdot \mathrm{lane})$ in our simulation scenario, which is consistent with our simulation results.
	When traffic volume reaches $1200 \, \mathrm{veh/(hour} \cdot \mathrm{lane})$, the MP algorithm can hardly improve the traffic efficiency and fuel consumption, since in this high-density traffic condition, the queuing is inevitable and the optimization space for CAVs is rather small. 
	
	Finally, we analyze the influence of MPR. Note that existing research has pointed out that as MPR increases, the CAV's benefits on intersections may not increase in positive correlation; see, \eg,~\cite{ala2016modeling}. 
	A similar result is also observed in our simulation. The proposed MP algorithm separates the mixed traffic flow into standard ``$ 1+n $" mixed platoons. The most crucial factor that influences the performance is whether all the HDVs are included in the mixed platoons. When MPR is relatively high (\eg, $ 70\% $), almost all the HDVs can be incorporated, which helps to greatly improve the traffic performance. However, when the MPR is extremely high (\eg, $ > 90\% $), the improvement percentage drops a little, which means that the MP algorithm is not the best policy for almost 100\% MPR scenarios. In general, our proposed algorithm shows evident positive improvement in almost all traffic conditions. The extensive simulation results confirm the great benefits of the MP algorithm in the mixed traffic intersection.

	\section{Conclusions}
	\label{Section:Conclusion}
	
	In this paper, the notion of ``$ 1+n $" mixed platoon for CAV control in mixed traffic intersections has been proposed. Assigned as the leading vehicle of $ n $ HDVs, the potential of the CAV in improving the global traffic mobility at the intersection has been revealed. Based on rigorous theoretical analysis, we have pointed out that the proposed mixed platoon formation is open-loop stable and controllable in a very mild condition, which is irrelevant to the mixed platoon size $ n $. 
	An optimal control framework has been established with consideration of velocity deviation and fuel consumption of the whole mixed platoon, where the terminal velocity has also been optimized to improve traffic throughput. Furthermore, a hierarchical event-triggered algorithm has been designed to solve the collision problem between adjacent mixed platoons, which can be applied in any MPR of mixed traffic environments. Traffic simulations have verified the effectiveness of the proposed optimal control method.
	
	
	{One future direction is to address the problem of possible heterogeneous dynamics and model uncertainties in HDVs' behaviors in the optimal control framework for the ``$1+n$'' mixed platoon. 
	Considering that multiple CAVs could exist in a mixed platoon when passing the intersection in the same green phasing time, especially in a high-MPR scenario, another interesting topic is to apply cooperative control algorithms to CAVs to further improve the overall performance for the mixed traffic intersection. Moreover, note that this paper focuses on the longitudinal control of CAVs by forbidding lane changing in CZ, addressing the lane changing behavior formally for both CAVs and HDVs is also a significant future direction.} Finally, field experiments are also needed for further validation of the proposed algorithm.
	
	
	\section*{Acknowledgments}
	This work was supported by the National Key Research and Development Program of China under Grant\\ 2018YFE0204302, the National Natural Science Foundation of China under Grant 52072212, the Key-Area Research and Development Program of Guangdong Province under Grant 2019B090912001, Institute of China Intelligent and Connect Vehicles (CICV) and Tsinghua University-Didi Joint Research Center for Future Mobility. 
	
		\appendix
		\section{Proof of Theorem~\ref{Th:Stability}}
		\label{sec:Appendix}
		We consider the ``$1+n$'' mixed platoon stability consists of $ 1 $ leading CAV and $ n $ following HDVs. Firstly, we consider the initial case of $ n=1 $, \emph{i.e.}, there is only one HDV following the leading CAV. 
		To obtain the eigenvalue $\lambda$ of $A_{n = 1}$, we need to solve $ \left| \lambda I - A_{n = 1} \right| = 0 $, leading to
		$
		{\lambda}^{4} + {\alpha_{2}}{\lambda}^{3} + {\alpha_{1}} {\lambda}^{2} = 0,
		$
		whose solutions are
		\begin{equation}
			\label{equ2:n2:EigenValue}
			\begin{gathered}
				\lambda_{1} =0,\; \lambda_{2} =0,\\
				\lambda_{3} =-\frac{\alpha_{2}}{2} - \frac{\sqrt{\alpha_{2}^{2} - 4 \alpha_{1}}}{2},\;
				\lambda_{4} =-\frac{\alpha_{2}}{2} + \frac{\sqrt{\alpha_{2}^{2} - 4 \alpha_{1}}}{2}.
			\end{gathered}	
		\end{equation}
		The two zero eigenvalues indicate that the system is not asymptotically stable, but can be made critically stable. According to Definition~\ref{def:Stability}, we have the stability criterion as $\alpha_{1}>0, \alpha_{2}>0$.
		
		Then, we assume the system is stable when $ n=k $, \ie, all the eigenvalues of $A_{n=k}$ have negative real parts, and focus on the case where $ n=k+1 $. The system matrix can be expressed as
		\begin{equation}
			A_{n = k + 1}=\left[ 
			\begin{array}{ccc}
				{{A}_{n = k}} & {0} & {0}\\
				{1} & {0} & {-1}\\
				{\alpha_{3}} & {\alpha_{1}} & {-\alpha_{2}}
			\end{array}\right].
		\end{equation}
		Given a block matrix 
		$$ 
		P = \left[ \begin{array}{cc}
			A & B \\
			C & D
		\end{array}\right],
		$$
		if $ D $ is invertible, it holds that\cite{greub2012linear}
		$$
		|P| = \left| \begin{array}{cc}
			A & B \\
			C & D
		\end{array}\right| = \left|A-BD^{-1}C\right|\left|D\right| .
		$$
		According to the initial state assumption, we have that 
		$$ 
		\begin{bmatrix}
			{\lambda} & {1}\\
			{-\alpha_{1}} & {\lambda + \alpha_{2}}
		\end{bmatrix}
		$$ 
		is invertible, and thus it is obtained that
		\begin{equation}
			\left| \lambda I - {A}_{n = k + 1} \right| 
			= \left|\lambda I - {A}_{n = k}\right|\left|\begin{array}{cc}
				{\lambda} & {1}\\
				{-\alpha_{1}} & {\lambda + \alpha_{2}}
			\end{array}\right|
			=\left|\lambda I - {A}_{n = k}\right| (\lambda^2+
			\alpha_2 \lambda+\alpha_1).
		\end{equation}
		According to the assumption, we only need to consider the solutions of $ \lambda^2+
		\alpha_2 \lambda+\alpha_1=0$, which are stable if and only if $\alpha_{1}>0, \alpha_{2}>0$. Hence, we have that ${A}_{n = k + 1} $ is stable if and only if $\alpha_{1}>0, \alpha_{2}>0$, which completes the proof of Theorem~\ref{Th:Stability} according to the method of mathematical induction.
	
	\section{Performance Indexes of Simulations under Different Traffic Volumes and MPRs}
	\label{sec:Appendix_Simulation}
	
	In this section, we present the specific data of performance indexes under MP control at different traffic volumes and MPRs in Section~\ref{sec:FinalSimulation}.
	\begin{table}[h]
	\centering
	\begin{tabular}{cccccccc}
		\toprule
		\diagbox[width=12em]{\textbf{MPR}}{\textbf{Traffic volume} \\ $ \mathbf{(vphpl)} $} & $ \mathbf{600} $ & $ \mathbf{700} $ & $ \mathbf{800} $ & $ \mathbf{900} $ & $ \mathbf{1000} $ & $ \mathbf{1100} $ & $ \mathbf{1200} $ \\
		\midrule
		{$ \mathbf{0\%} $}   & $ 92.54 $ &  $ 114.86 $ & $ 132.10 $ & $ 151.22 $ & $ 163.67 $ & $ 178.98 $ & $ 178.92 $  \\
		\cline{2-8}
		{$ \mathbf{20\%} $} & $ 92.43 $ & $ 98.51 $ & $ 130.26 $ & $ 137.99 $ & $ 153.73 $ & $ 162.17 $ & $ 169.13 $ \\
		{$ \mathbf{40\%} $} & $ 92.31 $ & $ 93.12 $ & $ 128.53 $ & $ 139.12 $ & $ 154.69 $ & $ 175.66 $ & $ 156.51 $ \\
		{$ \mathbf{60\%} $} & $ 92.11 $ & $ 104.69 $ & $ 122.42 $ & $ 144.64 $ & $ 155.61 $ & $ 167.25 $ & $ 168.55 $ \\
		{$ \mathbf{80\%} $} & $ 92.27 $ & $ 101.41 $ & $ 100.20 $ & $ 122.91 $ & $ 148.48 $ & $ 158.63 $ & $ 165.96 $ \\
		{$ \mathbf{100\%} $} & $ 92.27 $ & $ 97.75 $ & $ 107.10 $ & $ 136.39 $ & $ 158.79 $ & $ 156.97 $ & $ 159.95 $ \\
		\bottomrule
	\end{tabular}
	\caption{Average travel time delay (s) simulation results of $ 0\% $ and $ 100\% $ MPR under $ 600 \, \mathrm{to} \, 1200 \, \mathrm{veh/(hour \cdot lane)}$ traffic volume.}
	\label{tab:Result:ATTD}
\end{table}

\begin{table}[h]
	\centering
	\begin{tabular}{cccccccc}
		\toprule
		\diagbox[width=12em]{\textbf{MPR}}{\textbf{Traffic volume} \\ $ \mathbf{(vphpl)} $} & $ \mathbf{600} $ & $ \mathbf{700} $ & $ \mathbf{800} $ & $ \mathbf{900} $ & $ \mathbf{1000} $ & $ \mathbf{1100} $ & $ \mathbf{1200} $ \\
		\midrule
		{$ \mathbf{0\%} $}   & $ 3.81 $ &  $ 12.2 $ & $ 21.66 $ & $ 35.29 $ & $ 45.68 $ & $ 59.64 $ & $ 58.23 $  \\
		\cline{2-8}
		{$ \mathbf{20\%} $} & $ 3.78 $ & $ 6.28 $ & $ 19.17 $ & $ 23.57 $ & $ 34.37 $ & $ 40.31 $ & $ 48.12 $ \\
		{$ \mathbf{40\%} $} & $ 4.06 $ & $ 4.18 $ & $ 17.59 $ & $ 24.94 $ & $ 35.25 $ & $ 51.31 $ & $ 37.42 $ \\
		{$ \mathbf{60\%} $} & $ 3.76 $ & $ 8.45 $ & $ 16.19 $ & $ 28.80 $ & $ 36.78 $ & $ 46.57 $ & $ 42.07 $ \\
		{$ \mathbf{80\%} $} & $ 4.04 $ & $ 7.20 $ & $ 6.47 $ & $ 15.61 $ & $ 33.20 $ & $ 36.47 $ & $ 44.87 $ \\
		{$ \mathbf{100\%} $} & $ 4.08 $ & $ 5.79 $ & $ 9.07 $ & $ 23.32 $ & $ 37.32 $ & $ 38.60 $ & $ 39.12 $ \\
		\bottomrule
	\end{tabular}
	\caption{Fuel consumption ($ \mathrm{L}/100\mathrm{km} $) simulation results of $ 0\% $ and $ 100\% $ MPR under $ 600 \, \mathrm{to} \, 1200 \, \mathrm{veh/(hour \cdot lane)}$ traffic volume.}
	\label{tab:Result:FC}
\end{table}
	
	
	\bibliography{mybibfile}

\end{document}